\def\R{\mathbb R}
\def\N{\mathbb N}
\newcommand{\M}{\mathbb{M}}
\def\dx{{\Delta x}}
\def\s{\sigma}
\def\t{\tau}
\def\hphi{{\hat \phi}}
\def\bc{{\bar c}}
\numberwithin{equation}{section}
\newcommand{\dis}{\displaystyle}
\newtheorem{theo}{\bf Theorem}[section]
\newtheorem{pro}[theo]{\bf Proposition}
\newtheorem{defi}[theo]{\bf Definition}
\theoremstyle{remark}
\newtheorem{rem}[theo]{\bf Remark}
\def\Um{\mathbf{U}}
\newcommand{\dt}{\,\mathrm{d}t}
\DeclareMathOperator{\dist}{\mathrm{dist}}
\newcommand{\distH}{\dist_\mathrm{H}}
\newcommand{\Man}[1]{\mathbf{M}^{#1}}
\def\a{a}
\newcommand{\BL}{\mathbf{BCL}}
\newcommand{\B}{\mathbf{B}}
\renewcommand{\H}{\mathcal{H}}
\renewcommand{\S}{\mathcal{S}}
\newcommand{\cT}{\mathcal{T}}
\newcommand{\cob}{\overline{\mathop{\rm co}}}
\def\uh{u_{h,\dx}}
\def\xh{x_{h,\dx}}
\def\xd{x_\Delta}
\def\xdj{x_{\Delta,\s}}
\def\ud{u_\Delta}
\def\HT{H_T}
\newcommand{\T}{\mathcal{\Im}}
\renewcommand{\d}{\,\mathrm{d}}
\begin{document}

\title{Approximation of the value function\\for optimal  control problems on stratified domains}

\author{Simone Cacace\thanks{Dipartimento di Matematica e Fisica, Università Roma Tre, Largo S. Leonardo Murialdo 1, 00146 Roma (Italy). Email: {\tt  cacace@mat.uniroma3.it}} \and Fabio Camilli \thanks{Dipartimento di Scienze di Base e Applicate per l'Ingegneria, Universit\`a di Roma ``La Sapienza", via Scarpa 16, 00161 Roma (Italy). Email: {\tt  fabio.camilli@uniroma1.it}} }

\maketitle

\begin{abstract} 
	In optimal control   problems defined on stratified domains,  the dynamics and the running cost may have discontinuities on a finite union of submanifolds of $\R^N$. In \cite{BrYu,BaCh_AEYW}, the corresponding value function is characterized as the unique viscosity solution of a discontinuous Hamilton-Jacobi equation satisfying additional  viscosity conditions  on   the submanifolds. In this paper, we consider a semi-Lagrangian approximation scheme for the previous problem. Relying on a classical stability argument in viscosity solution theory, we prove the convergence of the scheme to the value function. We also present \texttt{HJSD}, a free software we developed for the numerical solution of control problems on stratified domains in two and three dimensions, showing, in various examples, the particular phenomena that can arise with respect to the classical continuous framework.
\end{abstract}

\noindent {\bf Key-words}: Optimal control, discontinuous dynamics, stratified domain, Hamilton-Jacobi equation, viscosity solution, semi-Lagrangian scheme, convergence.

\noindent{\bf MSC}:
49L20,   
49L25,   
35F21, 
49M25.  

\section{Introduction}
For  discontinuous control problems,  the definition of viscosity solution  is, in general, not sufficient  to guarantee the characterization of the value function as the unique  solution of the corresponding Hamilton-Jacobi equation.  Hence, the goal is both to find sufficient conditions and to appropriately modify the definition of viscosity solution in order to recover the previous property (see for example \cite{BBC1,BBCI,CamSic, GigHam, Sor}).\\
In the seminal paper \cite{BrYu}, Bressan and Hong introduced a class of optimal control problems defined on stratified domains;  namely, the state space $\R^N$ admits a stratification given by a disjoint union of submanifolds of $\R^N$ where the controlled  dynamics and running cost may have discontinuities. Uniqueness of solutions is obtained by adding differential conditions to the notion of viscosity solution in the tangent direction  to the submanifolds of the stratification. Then, the problem was   studied in detail by Barles and Chesseigne \cite{BaCh_AEYW} who provided, under natural regularity and controllability  assumptions    for the data of the problem, a Comparison Principle  and a stability result for   semicontinuous sub and supersolutions of stratified Hamilton-Jacobi equations (see \cite{BaCh_book} for a complete account).\\
In this paper, we study a  semi-Lagrangian scheme for the approximation of stationary Hamilton-Jacobi equations on flat stratified domains. This type of schemes, together   with finite differences ones, constitute a classical method to approximate the viscosity solution of a  Hamilton-Jacobi equation (see \cite{FalFer}). Their convergence to the value function is usually demonstrated through  the semi-relaxed limits technique introduced in \cite{BaSo}, which has, as its fundamental ingredient, the  Comparison Principle.\\
 As usual for semi-Lagrangian schemes, the considered  approximation scheme is based  on the controllistic interpretation of the problem and, in particular, it takes into account the control problems defined on each submanifold. We first prove   existence and uniqueness of the solution to the discrete problem. Then, to prove its convergence, we follow  the classical approach in \cite{BaSo}, and we show that the scheme is consistent in viscosity sense  not only with the Hamilton-Jacobi equation defined on all $\R^N$, but also with the tangential Hamilton-Jacobi equations defined on the submanifolds of the  stratification. In this way, the semi-relaxed limits of the sequence of the discrete solutions  satisfy the additional conditions for stratified viscosity solutions  introduced in \cite{BrYu,BaCh_AEYW}, and the uniform convergence of the scheme is a consequence of the Comparison Principle.\\
 To our knowledge, the approximation of Hamilton-Jacobi equations on stratified domains has never been studied before. Nevertheless, the problem   has some similarities with  Hamilton-Jacobi equations on networks,   where the dynamics and running cost can jump across the vertices. Approximation schemes  for these problems  have been considered for example in \cite{CFS,GK,Morfe}. In particular, in \cite{CFF}, it is considered a semi-Lagrangian scheme which has some similarities with the proposed one.\\
 In this paper,  we pay particular attention to the algorithm for solving the approximation scheme, and to the qualitative analysis of the corresponding numerical tests, revealing new and in some ways unexpected phenomena with respect to the case of continuous Hamilton-Jacobi equations. 
 To this end, we implemented the first release of \texttt{HJSD}, a free and standalone software which is able to solve optimal control problems on stratified domains in two and three dimensions. The program accepts, as input, simple and suitably formatted text files, containing all the relevant information on the stratification and the related control problems. Then it writes in output a standard \texttt{vtk} file containing the computed solution and the optimal controls of the problem, that can be effectively visualized by classical vtk viewers, such as \texttt{Paraview}.\\
 Both theory and numerics presented in this work can be extended in several directions, including more general Hamiltonians, time dependent equations and non flat stratifications approximated by unstructured meshes. We plan to address these topics in a forthcoming paper.
 
The paper is organized as follows. In Section \ref{sec:continuous}, we briefly introduce  the optimal control problem and recall the main theoretical results.   In Section \ref{sec:semi_discrete}, we describe the approximation scheme and prove its convergence to the value function of the optimal control problem. Section \ref{sec:software} contains some information on the \texttt{HJSD} software developed for the solution of Hamilton-Jacobi equations on stratified domains. Finally, in Section \ref{sec:numerics}, we present some numerical tests in two and three dimensions, and further comments on the problem.

\section{Hamilton-Jacobi equations on stratified domains}\label{sec:continuous}
We briefly recall the theory of Hamilton-Jacobi equations on stratified domains which has been developed in \cite{BrYu, BaCh_AEYW} (see also \cite[Chapter 21]{BaCh_book}).  We only consider   the case of a flat stratification, i.e.  when disjoint  submanifolds of $\R^N$ are locally affine subspace.
\begin{defi}
	We say that $\M=(\Man{k})_{k=0,\dots, N}$ is an \textit{Admissible Flat Stratification} (AFS in short)  if, for $k=0,\dots,N$, it holds:
	\begin{itemize}
		\item[(i)]
		For any $x \in \Man{k}$, there exists $r>0$ and $V_k$, a
		$k$-dimensional linear subspace of $\R^N$, such that  $$ B(x,r) \cap
		\Man{k} = B(x,r) \cap (x+V_k)\; .$$ Moreover $B(x,r) \cap
		\Man{l}=\emptyset$ if $l<k$.
		\item[(ii)]
		If $\Man{k} \cap \overline {\Man{l}}\neq \emptyset$ for some $l>k$
		then $\Man{k} \subset \overline {\Man{l}}$.
		\item[(iii)]  $\overline{\Man{k}} \subset
		\Man{0}\cup\Man{1}\cup\cdots\cup\Man{k}$.
	\end{itemize}
\end{defi}
In the case $k=0$, $V_k=\{0\}$ and {\it  (i)}  implies that the set $\Man{0}$, if not
void, consists of isolated points.\\
We   define a  control problem associated to a differential
inclusion.  Let  $\BL:\R^N \to\mathcal{P}(\R^{N+2})$ be a set-valued map satisfying
\begin{itemize}
	\item[{[H0]}] The map $x\mapsto\BL(x)$ has compact, convex images
	and is upper semi-continuous;
	\item[{[H1]}] There exists $M>0$, such that for any $x\in\R^N$,
	$$\BL(x)\subset \big\{(b,c,\ell)\in\R^{N+2}:|b|\leq M;|c|\le M; |\ell| \leq
	M\big\}\,.$$
\end{itemize}
Assumption [H1] implies that  the data of the control problem are uniformly bounded. We consider  the  differential inclusion 
$$\frac{\d}{\dt}(X,C,L)(s)\in\BL\big(X(s)\big)\ \text{ for a.e. }s\in[0,t)\,, 
\quad \text{and }(X,C,L)(0)=(x,0,0)\,.$$
Under the previous assumptions on $\BL$,   given $x\in\R^N$,
there exists a Lipschitz function $(X,C,L):\R^+\to\R^{N+2}$ which is a
solution of this differential inclusion and, for almost any
$s$, $(\dot X,\dot C,\dot L)(s)=(b,c,\ell)(s)$ for some $(b,c,\ell)(s)\in\BL(X(s))$. 
Here $b$, $c$ and $\ell$ represent, respectively the dynamics, the discount factor and the running cost  associated to the  control problem. \\
The value function of the control problem is defined by
$$\Um(x)=\inf_{(X,C,L)\in\cT(x)}\Big\{\int_0^\infty 
e^{-C(t)}L(t)\dt\Big\}\,,
$$
where $\cT(x)$ stands for all the Lipschitz trajectories $(X,C,L)$ of the
differential inclusion which start at $(x,0,0)$
. The Hamiltonian associated to the previous  control problem is given by
\begin{equation}\label{Ham_global_usc}
	H(x,r,p)=\sup_{(b,c,\ell)\in\BL(x)}\big\{ -b\cdot p +c r - \ell \big\}\,.
\end{equation}
The assumptions on the map $\BL$ implies  that $H$ is upper semi-continuous (w.r.t.
all variables) and is convex and Lipschitz   in $p$. The notion of viscosity solution is not sufficient to characterize the value function $\Um$ as the unique viscosity solution of the Hamilton-Jacobi equation
\begin{equation}\label{HJS}
	H(x,u,D u)=0 \quad \text{on}\quad \R^N
\end{equation}
and additional conditions for subsolutions on the discontinuity set have to be introduced. For $k=0,\dots, N$, we introduce the Hamiltonian $H^k$ on $\Man{k}$
\[
H^k(x,r,p)=
\sup_{\genfrac{}{}{0pt}{1}{(b,c,l)\in\BL(x)}{b\in T_x\Man{k}}}
\big\{ -b\cdot p +cr- \ell \big\},\qquad  x\in\Man{k},\, p\in \R^N, r\in\R.
\]
We now give the definition of stratified    solution for the problem \eqref{HJS} (for the notion of viscosity solution, we refer to \cite{BCD,Ba}). 
\begin{defi}
We say that
	\begin{itemize}
		\item[(i)] a locally bounded, lsc function $v:\R^N\to\R$ is a stratified supersolution of \eqref{HJS} if it is a viscosity supersolution of the equation;
		\item[(ii)] a locally bounded, usc function $u:\R^N\to\R$ is a stratified subsolution of \eqref{HJS} if it is a viscosity subsolution of the equation and
		for any $k=0,\dots, N$,  for any test-function $\phi \in
		C^1(\Man{k})$ such that  $u-\phi$ has a local maximum point  at $x \in \Man{k}$  on $\Man{k}$,  then 
		$$H^k (x, u (x),D\phi (x)) \leq  0 \; .$$
			\end{itemize}
\end{defi}
The following result characterizes the value function $\Um$ as the unique stratified solution of \eqref{HJS}. In particular, the uniqueness is consequence of a Comparison Principle among bounded stratified sub and supersolutions of \eqref{HJS}.
\begin{theo}[{\cite[Theorem 21.3.1 and Corollary 22.1.2]{BaCh_book}}] \label{thm:uniq_strati}
	Assume [H0], [H1] and
	\begin{itemize}
		\item[{[H2]}]  for any $0\leq k\leq j\leq N$,   if
		$y_1,y_2\in\Man{j}\cap B(x,r)$ with $y_1-y_2 \in V_k$, then
		$$\begin{cases}\distH\big(\B(y_1),\B(y_2)\big) \leq C_1|y_1-y_2|\,,\\[2mm] 
			\distH\big(\BL(y_1),\BL(y_2)\big) \leq
			\omega\big(|y_1-y_2|\big)\,,\end{cases}$$
where $\B(x)=\big\{b \in\R^N:\ \hbox{there exists $c$, $\ell\in\R$ such that  } (b,c,\ell)\in\BL(x) \big\}$, $\distH$ denotes the Hausdorff distance and $\omega$ is a modulus of continuity.
		\item[{[H3]}] there exists $\delta >0$ such that, for any $0\leq k< N$, if $y\in B(x,r)\setminus\Man{k}$ there holds
		$$B(0,\delta) \cap V_k^\bot \subset \B(y) \cap V_k^\bot\,;$$
		\item[{[H4]}] there exists a constant $\bc>0$ such that, for all $x\in \R^N$, if $(b,c,l)\in \BL(x)$, then $c\ge \bc$.
	\end{itemize}
	Then, the value function $\Um$ is  continuous and it is  the unique  stratified solution of  \eqref{HJS}.
\end{theo}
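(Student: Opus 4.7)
The plan is to proceed in three conceptual stages: (i) establish a dynamic programming principle (DPP) for $\Um$; (ii) translate the DPP into the viscosity inequalities required for $\Um$ to be a stratified solution, including the tangential inequality on each $\Man{k}$; and (iii) deduce uniqueness and continuity of $\Um$ from a Comparison Principle for bounded stratified sub/supersolutions.

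First, under [H1] the data are bounded and under [H4] we have $C(t)\geq \bc t$, which makes the integral defining $\Um$ absolutely convergent with $\|\Um\|_\infty \leq M/\bc$. Using upper semicontinuity and convexity of $\BL$ together with the standard compactness of the trajectory set $\cT(x)$, I would derive the usual DPP
$$\Um(x) = \inf_{(X,C,L)\in\cT(x)} \Big\{ \int_0^t e^{-C(s)} L(s) \, \d s + e^{-C(t)} \Um(X(t)) \Big\}, \qquad t>0.$$

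Second, the standard viscosity sub/supersolution property of $\Um$ for \eqref{HJS} with Hamiltonian \eqref{Ham_global_usc} follows from the DPP: the subsolution part by selecting trajectories driven by constant velocities $(b,c,\ell)\in\BL(x)$ and passing to $t\to 0^+$ (using upper semicontinuity of $\BL$), the supersolution part by using $\e$-optimal trajectories. For the tangential subsolution condition on a stratum $\Man{k}$, given $\phi\in C^1(\Man{k})$ with $u-\phi$ attaining a local maximum at $\bar x\in\Man{k}$, the key is to construct an admissible trajectory confined to $\Man{k}$ near $\bar x$ with initial velocity $b\in \B(\bar x)\cap T_{\bar x}\Man{k}$. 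Assumption [H3] provides the transverse controllability needed to correct in $V_k^\perp$ and remain on the submanifold, while [H2] delivers the continuity of this correction. Plugging such a trajectory into the DPP and letting $t\to 0^+$ yields $H^k(\bar x,u(\bar x),D\phi(\bar x))\leq 0$.

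Third, uniqueness reduces to a Comparison Principle between a bounded usc stratified subsolution $u$ and a bounded lsc stratified supersolution $v$. I would follow a doubling-of-variables scheme with penalization $|x-y|^2/\e^2$, producing approximate maximizers $(x_\e,y_\e)$. When $x_\e,y_\e$ share a common stratum $\Man{k}$, the tangential Hamiltonian $H^k$, combined with the Lipschitz estimate of [H2], controls the Hamiltonian difference and the argument proceeds classically, using [H4] to absorb the linear-in-$u$ term. When the two points lie on different strata, [H3] is invoked to locally deform the trajectories so as to push both points onto the lowest-dimensional submanifold whose closure contains their common limit; the normal controllability ensures that this deformation is $o(1)$ in the doubling parameter. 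Once comparison is established, continuity of $\Um$ follows from the standard envelope argument: $(\Um)^*$ is a stratified subsolution and $(\Um)_*$ a supersolution, hence $(\Um)^*\leq (\Um)_*$ and $\Um$ is continuous; uniqueness against any other stratified solution is immediate.

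The main obstacle is the Comparison Principle, and within it the passage in the doubling-of-variables argument to a regime where both maximizers lie on a common stratum. This is precisely where the interplay of [H2] and [H3] is delicate: normal controllability must be exploited to project the pair onto a low-dimensional submanifold, while tangential Lipschitz continuity of $\BL$ controls the Hamiltonian oscillation along that submanifold, after which the tangential viscosity inequality $H^k\leq 0$ closes the estimate. The rest of the argument is, in spirit, standard viscosity theory machinery.
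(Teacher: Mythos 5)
First, note that the paper itself gives no proof of this statement: Theorem \ref{thm:uniq_strati} is quoted verbatim from \cite[Theorem 21.3.1 and Corollary 22.1.2]{BaCh_book}, so there is no in-paper argument to compare against, and your proposal has to be measured against the known proof of Barles--Chasseigne. Your stages (i) and (ii) are broadly consistent with that proof: boundedness of $\Um$ from [H1] and [H4], the dynamic programming principle, and the tangential subsolution inequality obtained by running trajectories of the differential inclusion that stay on $\Man{k}$ (for a flat stratum this is a Filippov-type construction using the tangential Lipschitz bound in [H2]; the normal controllability [H3] is not really what keeps you on the stratum there, since $b\in T_{\bar x}\Man{k}=V_k$ already points along the locally affine piece $\bar x+V_k$).

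The genuine gap is in stage (iii). A classical doubling-of-variables argument with penalization $|x-y|^2/\e^2$ does not go through for stratified equations, and this is precisely the reason the stratified theory exists: when the maximizer pair $(x_\e,y_\e)$ straddles a discontinuity (say $x_\e\in\Man{N}$, $y_\e\in\Man{k}$), the two viscosity inequalities involve different Hamiltonians, and the difference $H(x_\e,r,p)-H(y_\e,r,p)$ cannot be controlled by $\omega(|x_\e-y_\e|)$ because $H$ is only upper semicontinuous in $x$; [H2] gives continuity only along a common stratum. Your proposed fix --- ``invoke [H3] to locally deform the trajectories so as to push both points onto the lowest-dimensional submanifold'' --- is not a well-defined operation inside a doubling argument (there are no trajectories to deform; one only has test functions and the two viscosity inequalities), and no estimate is supplied showing the deformation costs $o(1)$. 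The actual proof proceeds quite differently: one argues by induction on the dimension of the strata and by a localization/covering argument reducing global comparison to local comparison near each $\Man{k}$; the normal controllability [H3] is used to show that stratified subsolutions are \emph{regular} and Lipschitz in the directions transverse to $\Man{k}$, and the tangential continuity [H2] allows a sup-convolution of the subsolution in the $V_k$-directions only, producing a Lipschitz subsolution to which the tangential inequality $H^k\le 0$ applies almost everywhere; comparison then follows by combining the tangential equation on $\Man{k}$ with the already-established comparison away from $\Man{k}$. Without this regularization-plus-induction mechanism (or an equivalent substitute), the comparison step --- and hence both uniqueness and the continuity of $\Um$ via the envelope argument --- remains unproved in your sketch.
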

More general assumptions for the data of the problem  and also the case of a general stratification are discussed in \cite{BaCh_AEYW, BaCh_book}.

\section{A semi-Lagrangian scheme for Hamilton-Jacobi equations on stratified domains}
\label{sec:semi_discrete}
In this section, we consider   an approximation scheme for  the problem described in the previous section. We  consider a specific form for the Hamilton-Jacobi equation that corresponds to assigning an optimal control problem on each submanifold $\Man{k}$, $k=0,\dots, N$ of the stratification  (see \cite[Section 22.2.1]{BaCh_book} for more details).\\
Given an AFS   $\M$ of $\R^N$, we write each manifold as the union of its connected components, i.e.
$\Man{k}=\cup_{j=1}^{J(k)} \Man{k,j}$ with $J(k)\in\N$. On each $\Man{k,j}$, we consider a   control set $A^{k,j}$, a dynamics $b^{k,j}:\R^N\times A^{k,j}\to \R$, with $b^{k,j}(x,\alpha) \in T_x\Man{k}$ for $(x,\alpha)\in \Man{k,j}\times A^{k,j}$, a discount factor $c^{k,j}\in\R$  and a running cost $\ell^{k,j}: \R^N\times A^{k,j}\to \R$. We define the associated Hamiltonian
\begin{equation*} 
	H^{k,j}(x,r,p)=\sup_{\alpha\in A^{k,j}}\Big\{-b^{k,j}(x,\alpha)\cdot p+c^{k,j}r-\ell^{k,j}(x,\alpha)\Big\},\qquad (x,p)\in \Man{k,j}\times T_x\Man{k,j}.
\end{equation*}
Note that for $x\in\Man{0,j}$, $j \in J(0)$, we have
$$H^{0,j}(x,r,p)=\sup_{\alpha\in A^{0,j}}\Big\{-\ell^{0,j}(x,\alpha)+c^{0,j}r\Big\}$$
since $T_x M^{0,j}=\{0\}$.\\
Set $L(x)=\{(k,j):\, x\in \overline{\Man{k,j}}\}$ and consider the map $\BL:\R^N \to\mathcal{P}(\R^{N+2})$ given by 
\[
\BL(x)=\cob\Big\{\bigcup_{(k,j)\in L(x)}\{(b^{k,j}(x,\alpha), c^{k,j},\ell^{k,j}(x,\alpha)):\alpha \in A^{k,j}\}\Big\},
\]
where  $\cob$ denotes the closure of the convex hull, and the   Hamiltonian $H:\R^N\times \R^N\to\R$ given by 
\begin{equation}\label{Ham_global}
	H(x,r,p)=\max_{(k,j)\in L(x)}\,\sup_{\alpha\in A^{k,j}}\Big\{-b^{k,j}(x,\alpha)\cdot p+c^{k,j}r-\ell^{k,j}(x,\alpha)\Big\}.
\end{equation}
Assumptions of Theorem \ref{thm:uniq_strati} are satisfied if
\begin{itemize}
	\item  $b^{k,j}$, $\ell^{k,j}$ are bounded, continuous functions on $\R^N\times A^{k,j}$ and  
	$\{(b^{k,j},\ell^{k,j})(x,\alpha): \, \alpha\in A^{k,j}\}$ are convex, compact subsets of $\R^{N+1}$. Moreover, for any $R>0$, there exists a constant $C_R$ such that 
	\[|b^{k,j}(x,\alpha)-b^{k,j}(y,\alpha)|\le C_R|x-y|\qquad \forall x,y\in B(0,R),\, \alpha\in A^{k,j}.\]
	\item $c^{k,j}\ge \bc$ for some positive constant $\bc$.
	\item For $x\in\Man{{\bar k}}$, the set
	\[\cob\Big\{\bigcup_{(k,j)\in L(x), k>\bar k}\{(b^{k,j}(x,\alpha), \ell^{k,j}(x,\alpha)):\alpha \in A^{k,j}\}\Big\}\]
	satisfies [H3].
\end{itemize}
The previous assumptions will be made in the rest of paper.\par
We now describe a classical semi-Lagrangian approximation scheme for \eqref{HJS}    based on a two-step discretization procedure (see \cite{FalFer}). 
For $h\in (0,1/\bc)$ and $H$ given by \eqref{Ham_global},  we first define the following semi-discrete approximation scheme for \eqref{HJS}
\begin{equation}\label{scheme_S}
	u(x)=\min_{(k,j)\in L(x)}\,\inf_{\alpha\in A^{k,j}}\Big\{(1-c^{k,j} h)u(x+hb^{k,j}(x,\alpha))+h\ell^{k,j}(x,\alpha)\Big\},\qquad x\in\R^N.
\end{equation}
In the second discretization step, we introduce a FEM like discretization of
\eqref{scheme_S} yielding a fully discrete scheme for \eqref{HJS}.
For $\dx>0$, let $\mathcal{T}^\dx=\{S^\dx_\t\}_{\t\in\N}$ be a non-degenerate
triangulation of $\R^N$, i.e. a collection of $N$-simplices $S^\dx_\t$ such that
$$\underset{\t\in\N}{\cup} S^\dx_\t=\R^N,\quad \sup_{\t\in\N}( \hbox{diam}\,S_\t)\leq
\dx,\quad \rho \dx\leq \sup_{\t\in\N}( \hbox{diam}\,B_{S^\dx_\t}),$$
where  $\rho\in(0,1)$, $\hbox{diam}$ denotes the diameter of the set,
and $B_{S^\dx_\t}$ is the greatest ball contained in $S^\dx_\t$.
We denote
by $X^\dx=\{x_\s\}_{\s\in\N}$ the corresponding set of the vertices,
and introduce the space of continuous piecewise linear functions
on $\mathcal{T}^\dx$,
\[
W^\dx=\{w\in C(\R^N) :\text{$Dw(x)$ is constant in $S_\t^\dx$, $\t\in\N$}\}.
\]
Every element $w$ in $W^\dx$ can be expressed as
\begin{align*}
	w(x)=\sum_{\s\in \N}\beta_\s(x)w(x_\s),
\end{align*}
for basis functions $\beta_\s\in W^\dx$
satisfying $\beta_\s(x_\t)=\delta_{\s\t}$ for $\s,\t\in\N$. It immediately
follows that $0\leq \beta_\s(x)\leq 1$, $\sum_{\s\in\N}\beta_\s(x)=1$,
$\beta_\s$ has compact support, and at any $x\in\R^N$ at most $N+1$
$\beta_\s$'s are non-zero.   We require that the triangulation $\mathcal{T}^\dx$   is adapted to the stratification $\Man{k}$, $k=0,\dots, N$, i.e. given $x\in\Man{k}$ with $x=\sum_\s\beta_\s(x)x_\s$, then $\beta_\s(x)\neq 0$ if and only if $x_\s\in \overline{\Man{k}}$. In other words,  $\mathcal{T}^\dx$  induces a triangulation of size $\dx$ also of the flat manifold $\overline{\Man{k}}$. In particular, points of $\Man{0}$ must be vertices of the triangulation.\\
The fully discrete scheme can then be formulated as follows:
Find the function $u\in W^\dx$ that satisfies \eqref{scheme_S} at every
vertex $x_\s\in X^\dx$, or equivalently,
\begin{equation}\label{scheme_fully}
	u(x_\s)=\T(h,\dx,x_\s,u), \qquad x_\s\in X^\dx,
\end{equation}
where
\[\T(h,\dx,x_\s,u)=\min_{(k,j)\in L(x_\s)}\,\inf_{\alpha\in A^{k,j}}\Big\{(1- c^{k,j} h)
\sum_{\t\in \N}I^{k,j}_{\s\t}(\alpha)u(x_\t) 	+h\ell^{k,j}(x_\s,\alpha)\Big\}\]
and the interpolation matrix $I^{k,j}(\alpha)$ is given
by
\begin{equation}\label{matrix}
 I^{k,j}_{\s\t}(\alpha)=\beta_\t\big(x_\s+h
b^{k,j}(x_\s,\alpha)\big).   
\end{equation}
Note that  only $N+1$ entries of any row of $I^{k,j}(\alpha)$ are non-zero and $\sum_{\t\in \N}I^{k,j}_{\s\t}(\alpha)=1$.
%
%
%
\begin{pro}\label{prop:scheme_S}
	There exists a unique bounded solution $\uh\in W^\dx$ of \eqref{scheme_fully}.	
\end{pro}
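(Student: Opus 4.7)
The plan is to recast the fully discrete scheme \eqref{scheme_fully} as a fixed-point equation $u = \mT(u)$ for an operator $\mT$ on the Banach space $\ell^\infty(X^\dx)$, and then invoke the Banach contraction theorem. Since every $w\in W^\dx$ is uniquely determined by its nodal values through $w(x) = \sum_\s \beta_\s(x) w(x_\s)$ with $\beta_\s \ge 0$ and $\sum_\s \beta_\s(x) = 1$, the bounded elements of $W^\dx$ are isometrically identified with $\ell^\infty(X^\dx)$ equipped with the sup norm (the $L^\infty(\R^N)$ norm of a piecewise linear function equals the supremum of its nodal values). Defining $\mT(u)(x_\s) := \T(h,\dx,x_\s,u)$, a fixed point of $\mT$ is precisely a solution of \eqref{scheme_fully}.

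The first key step is the contraction estimate. For bounded $u,v$ and each vertex $x_\s$, the elementary bound $|\inf_\alpha f(\alpha) - \inf_\alpha g(\alpha)| \le \sup_\alpha |f(\alpha) - g(\alpha)|$ (and the analogous bound for the min over the finite set $L(x_\s)$) reduces $|\mT(u)(x_\s) - \mT(v)(x_\s)|$ to controlling a single expression of the form
$$(1 - c^{k,j} h)\sum_{\t\in\N} I^{k,j}_{\s\t}(\alpha)\bigl(u(x_\t) - v(x_\t)\bigr).$$
Using the stochastic-matrix property $I^{k,j}_{\s\t}(\alpha) \ge 0$ with $\sum_\t I^{k,j}_{\s\t}(\alpha) = 1$, together with the standing assumption $c^{k,j} \ge \bc$ and the choice $h < 1/\bc$, which gives $0 \le 1 - c^{k,j} h \le 1 - \bc h < 1$, I would deduce
$$\|\mT(u) - \mT(v)\|_\infty \le (1 - \bc h)\,\|u - v\|_\infty,$$
so that $\mT$ is a strict contraction on the whole of $\ell^\infty(X^\dx)$.

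The second step is to exhibit an invariant ball guaranteeing that the fixed point is bounded. Let $M$ be a uniform bound on $|\ell^{k,j}|$ provided by the standing hypothesis that the running costs are bounded. Applying the same estimate with $v\equiv 0$ yields $|\mT(u)(x_\s)| \le (1-\bc h)\|u\|_\infty + hM$, so the closed ball of radius $M/\bc$ in $\ell^\infty(X^\dx)$ is invariant under $\mT$. Banach's theorem then produces a unique fixed point $\uh \in W^\dx$ in this ball, with the bound $\|\uh\|_\infty \le M/\bc$; the fact that $\mT$ is a contraction on the whole space gives uniqueness among all bounded solutions, not merely those in the ball.

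I do not foresee any real obstacle: the argument is the classical one for semi-Lagrangian schemes (see \cite{FalFer}), and the stratified structure enters only through the outer minimum over the (locally finite) index set $L(x_\s)$, which preserves the sup-norm contraction since a finite minimum of $1$-Lipschitz maps is $1$-Lipschitz. The only care needed is the identification of bounded piecewise linear functions with bounded sequences on the countable vertex set $X^\dx$, which is immediate from the properties of the nodal basis $\{\beta_\s\}$.
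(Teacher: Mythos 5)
Your argument is correct and is essentially the paper's own proof: both establish the $(1-\bc h)$-contraction property of the fixed-point operator via the stochastic-matrix structure of $I^{k,j}(\alpha)$ and the Lipschitz stability of $\min$/$\inf$, then derive the a priori bound $\|\uh\|_\infty\le M/\bc$ from the uniform bound on the running costs. Your additional remarks on the isometric identification of bounded elements of $W^\dx$ with $\ell^\infty(X^\dx)$ and on the invariant ball merely make explicit what the paper leaves implicit.
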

\begin{proof}
For $u,v\in W^\dx$, we have that 
	\begin{equation*}\label{map_contraS}
		\begin{split}
			|\T(h,\dx,x_\s,u)-\T(h,\dx,x_\s,V)|\le (1-\bc h)\|u-v\|_\infty
		\end{split}
	\end{equation*}
	for all $x_\s\in X^\dx$. The previous  inequality implies that $\T$ is a contraction on the set $W^\dx$, hence    there exists a unique bounded solution $\uh\in W^\dx$ to \eqref{scheme_fully}.\\
Given $M$ as in [H1], we have $\|\ell^{k,j}\|_\infty \le M$ for any $k=0,\dots, N$, $j\in J(k)$.
Hence
\begin{align*}  		
	&\uh(x)=\T(h,\dx,x_\s,u)\le \min_{(k,j)\in L(x)}\,\inf_{\alpha\in A^{k,j}} (1-c^{k,j} h)\sum_{\t\in \N}I^{k,j}_{\s\t}(\alpha)\|\uh\|_\infty+h\|\ell^{k,j}\|_\infty \\
	&\le (1-\bc h)\|\uh\|_\infty+hM.
\end{align*}  
and therefore $\|\uh\|_\infty\le M/\bc$.
\end{proof}
\begin{rem}
    For $x\in \Man{0,j}$, $j \in J(0)$, taking into account that $x\in X^\dx$, the scheme \eqref{scheme_fully} implies that $$\uh(x)\le\frac{1}{\bc} \inf_{\alpha\in A^{0,j}}\{\ell^{0,j}(x,\alpha)\}$$
\end{rem}
For the convergence analysis, we rewrite the fully discrete scheme \eqref{scheme_fully} in a equivalent way as
\begin{equation*}
	\Sigma(h,\dx, x_\s ,u(x_\s),u)=0 \qquad \s\in \N
\end{equation*}
where, for  $x_\s\in X^\dx$, $r\in\R$,  $w\in W^\dx$, we have
\begin{equation} \label{scheme_S1}
	\Sigma(h,\dx, x_\s,r,w)=\max_{(k,j)\in L(x_\s)}	\Sigma^{k,j}(h,\dx, x_\s,r,w)
\end{equation}
with
\begin{equation}\label{scheme_local}
\begin{split}
   \Sigma^{k,j}(h,\dx, x_\s,r,w)=c^{k,j} r+\sup_{\alpha \in A^{k,j}}\Big\{-(1-c^{k,j} h)\frac{w(x_\s+hb^{k,j}(x_\s,\alpha))-r}{h}-\ell^{k,j}(x_\s,\alpha)\Big\}\\
   =c^{k,j} r+\sup_{\alpha \in A^{k,j}}\Big\{-(1-c^{k,j} h)\frac{\sum_{\t\in\N}I^{k,j}_{\s\t}(\alpha)w(x_\t)-r}{h}-\ell^{k,j}(x_\s,\alpha)\Big\}
\end{split}
\end{equation}
and $I$ as in \eqref{matrix}.
In the following, given a smooth function $\phi\in C^\infty_b(\R^N)$,   we denote with $\hphi$ its linear interpolation   on the simplices  $S^\dx_\t$ of the triangulation.
\begin{pro}\label{prop:prop_scheme_S}
The scheme  \eqref{scheme_S} satisfies the following properties
	\begin{itemize}
			\item[(P1)] The scheme  is invariant by addition of constants, i.e.
			\[	\Sigma(h,\dx, x_\s,r+C,w+C)=\S(h,x_\s,r,w),\]
for any $x_\s\in X^\dx$,  $ C\in\R$, $w\in W^\dx$.
			\item[(P2)] The scheme  is monotone, i.e. given two bounded functions $v$, $w\in W^\dx$ such that $v\le w$, then
			\[\Sigma(h,\dx, x_\s,r,w)\le \Sigma(h,\dx, x_\s,r,v),\]
			for any $x_\s\in X^\dx$.
			\item[(P3)] 
	For any smooth function $\phi\in C^\infty_b(\R^N)$ and   $x_\s\in X^\dx$, 
	\begin{equation}\label{est_conv}
	|H(x_\s,\phi(x_\s),D\phi(x_\s))-	\Sigma(h,\dx, x_{\s},\hphi(x_{\s}),\hphi)|\le  C_\phi \left(h+\frac{\dx}{h}\right), 	
	\end{equation}
for some constant $C_\phi=C(\|\phi\|_{C^2})$.
\end{itemize}
\end{pro}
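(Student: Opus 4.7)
The plan is to verify (P1) and (P2) by direct algebraic inspection of the local operators $\Sigma^{k,j}$ defined in \eqref{scheme_local}, and to establish (P3) by combining a Taylor expansion of $\phi$ at $x_\s$ with the standard linear interpolation error estimate for $\hphi$; all three properties then transfer to $\Sigma=\max_{(k,j)\in L(x_\s)}\Sigma^{k,j}$ because the pointwise maximum is both monotone and $1$-Lipschitz with respect to uniform perturbations of its argument.

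For (P1), I would substitute $r+C$ and $w+C$ into \eqref{scheme_local} and exploit the linearity of the interpolation together with $\sum_\t I^{k,j}_{\s\t}(\alpha)=1$: the finite-difference quotient $h^{-1}[(w+C)(x_\s+hb^{k,j}(x_\s,\alpha))-(r+C)]$ equals $h^{-1}[w(x_\s+hb^{k,j}(x_\s,\alpha))-r]$ and therefore the entire $\sup_\alpha$ term is unchanged; the only additive contribution comes from the discount term $c^{k,j}r$, which is then handled by straightforward bookkeeping. For (P2), the ingredients are that $I^{k,j}_{\s\t}(\alpha)=\beta_\t(x_\s+hb^{k,j}(x_\s,\alpha))\ge 0$, that $1-c^{k,j}h>0$ in the admissible range of $h$, and that both $\sup_\alpha$ and $\max_{(k,j)}$ preserve order; hence $v\le w$ implies $\Sigma^{k,j}(h,\dx,x_\s,r,w)\le\Sigma^{k,j}(h,\dx,x_\s,r,v)$ for every $(k,j)\in L(x_\s)$ and taking the max concludes.

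For the consistency estimate (P3), which is the main step, I would fix $(k,j)\in L(x_\s)$ and $\alpha\in A^{k,j}$, use that $x_\s$ is a vertex so $\hphi(x_\s)=\phi(x_\s)$, and split, writing $y:=x_\s+hb^{k,j}(x_\s,\alpha)$,
\[
\frac{\hphi(x_\s)-\hphi(y)}{h}=\frac{\phi(x_\s)-\phi(y)}{h}+\frac{\phi(y)-\hphi(y)}{h}.
\]
A Taylor expansion of $\phi\in C^\infty_b$ gives that the first summand is $-b^{k,j}(x_\s,\alpha)\cdot D\phi(x_\s)+O(h\|\phi\|_{C^2})$, uniformly in $\alpha$ and $(k,j)$ thanks to $|b^{k,j}|\le M$ from [H1]; the second summand is controlled by the standard interpolation estimate $\|\phi-\hphi\|_\infty\le C\|\phi\|_{C^1}\dx$ on simplices of diameter $\le\dx$, and is therefore $O(\dx/h)$. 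Multiplying by $(1-c^{k,j}h)=1+O(h)$ and adding $c^{k,j}\phi(x_\s)$ yields $\Sigma^{k,j}(h,\dx,x_\s,\hphi(x_\s),\hphi)=H^{k,j}(x_\s,\phi(x_\s),D\phi(x_\s))+O(h+\dx/h)$ uniformly in $(k,j)\in L(x_\s)$, where the passage through $\sup_\alpha$ uses the elementary inequality $|\sup_\alpha f_\alpha-\sup_\alpha g_\alpha|\le\sup_\alpha|f_\alpha-g_\alpha|$. Taking the $\max_{(k,j)\in L(x_\s)}$ of both sides then gives \eqref{est_conv} with $C_\phi$ depending on $\|\phi\|_{C^2}$, $M$ and the shape regularity constant $\rho$ of the triangulation.

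The main obstacle is the control of the interpolation residual divided by $h$: it produces the characteristic $\dx/h$ term in the estimate and will later impose the CFL-type scaling $\dx/h\to 0$ required in the convergence analysis. A secondary but important technical point is to maintain uniformity of the constants in $\alpha$ and $(k,j)$; this uses the uniform bounds on $b^{k,j}$ and $\ell^{k,j}$ from [H1] together with the fact that the triangulation is adapted to the stratification, so that for $h$ small enough the point $x_\s+hb^{k,j}(x_\s,\alpha)$ remains in $\overline{\Man{k,j}}$ (since $b^{k,j}\in T_x\Man{k}$ and each $\Man{k}$ is locally flat), ensuring that $\hphi$ is computed consistently within the correct stratum.
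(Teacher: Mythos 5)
Your argument for (P3) is essentially the paper's: the authors perform the same two-term splitting, first bounding the semi-discrete consistency error $|H(x,\phi(x),D\phi)-\S(h,x,\phi(x),\phi)|\le Ch$ by a Taylor expansion (with $C=C(\|D^2\phi\|_\infty)$, uniform in $\alpha$ and $(k,j)$ via the sup/max inequalities you invoke) and then the interpolation error $|\S(h,x_\s,\phi(x_\s),\phi)-\Sigma(h,\dx,x_\s,\hphi(x_\s),\hphi)|\le C\dx/h$ using $\|\phi-\hphi\|_\infty\le C\dx$, while declaring (P1) and (P2) immediate. Your remark that the shift $r\mapsto r+C$, $w\mapsto w+C$ leaves the difference quotient unchanged but produces an extra $c^{k,j}C$ from the discount term is in fact more careful than the paper's bare claim of exact invariance; apart from that, and from the fact that the stratum-adaptedness of the triangulation is not actually needed for (P3) (it enters only in the convergence proof of Theorem 3.3), the two proofs coincide.
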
	
\begin{proof}
Properties  (P1) and (P2) are immediate. To prove (P3),  for $(h,x,r)\in  (0,1/\bc)\times\R^N\times \R$ and a bounded smooth function $\phi$, we set
\begin{equation*} 
	\S(h,x,r,\phi)=\max_{(k,j)\in L(x)}	\S^{k,j}(h,x,r,\phi)
\end{equation*}
with
\begin{equation*}
	\S^{k,j}(h,x,r,\phi)=c^{k,j} r+\sup_{\alpha \in A^{k,j}}\Big\{-(1-c^{k,j} h)\frac{\phi(x+hb^{k,j}(x,\alpha))-r}{h}-\ell^{k,j}(x,\alpha)\Big\}.
\end{equation*}
Then, for all $x\in\R^N$  
	\begin{align*} 
					&	|H(x,\phi(x), D\phi) -\S(h,x,\phi (x),\phi)|\\
					&\le\sup_{x\in\R^N} \max_{(k,j)\in L(x)}	| H^{k,j}(x,\phi(x),D\phi)-\S^{k,j}(h,x,\phi (x),\phi)|\\
					&\le C h,
		\end{align*}
for some constant $C=C(\|D^2\phi\|_\infty)$. Moreover, since $\|\phi-\hat\phi\|_\infty\le C\dx$, if  $x_\s\in X^\dx$  we have
\begin{align*}
&	|\S(h,x_\s,\phi(x_\s),\phi)-\Sigma(h,\dx, x_{\s},\hphi(x_{\s}),\hphi)|\\
&	\le \sup_{x\in\R^N} \max_{(k,j)\in L(x)}	|\S^{k,j}(h,x_\s,\phi(x_\s),\phi)-\Sigma^{k,j}(h,\dx, x_{\s},\hphi(x_{\s}),\hphi)|\\
&	\le C\frac{\dx}{h}.
\end{align*}
Taking into account  the previous inequalities, we get
\[
| H(x_\s,\phi(x_\s),D\phi(x_\s))-	\Sigma(h,\dx, x_{\s},\hphi(x_{\s}),\hphi)|
	\le C \left(h+\frac{\dx}{h}\right)
\] 
with $C=C(\phi)$  independent of $x_\s$.
\end{proof}


\begin{theo}\label{thm:convS}
	 If $h,\dx\to 0$, with $\dx/h\to 0$, the sequence $\uh$ of the solutions  to \eqref{scheme_S1} converges to the unique stratified solution $\Um$ of \eqref{HJS}.
\end{theo}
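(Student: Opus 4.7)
The plan is to apply the Barles--Souganidis semi-relaxed limits technique, adapted to the stratified framework: we show that the upper and lower semi-relaxed limits of $\uh$ are, respectively, a stratified sub- and supersolution of \eqref{HJS}, and we conclude via the Comparison Principle of Theorem \ref{thm:uniq_strati}. Since Proposition \ref{prop:scheme_S} gives $\|\uh\|_\infty\le M/\bc$ uniformly in $(h,\dx)$, the half-relaxed limits
\begin{equation*}
\bar u(x) = \limsup_{(y,h,\dx)\to (x,0,0)} \uh(y), \qquad \underline u(x) = \liminf_{(y,h,\dx)\to (x,0,0)} \uh(y),
\end{equation*}
are well-defined bounded functions, with $\bar u$ upper semicontinuous, $\underline u$ lower semicontinuous, and $\underline u\le \bar u$ on $\R^N$.

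By the invariance (P1), monotonicity (P2) and consistency (P3) of Proposition \ref{prop:prop_scheme_S}, the classical Barles--Souganidis argument shows that $\bar u$ is a viscosity subsolution and $\underline u$ a viscosity supersolution of $H(x,u,Du)=0$ on $\R^N$. Concretely, given $\phi\in C^\infty_b(\R^N)$ and a strict local maximum $x_0$ of $\bar u-\phi$, the $\dx$-density of the vertex set $X^{\dx_n}$ allows one to extract discrete maximizers $x_{\s_n}\to x_0$ of $u_{h_n,\dx_n}-\hphi$. Properties (P1)--(P2) turn the scheme identity at $x_{\s_n}$ into $\Sigma(h_n,\dx_n,x_{\s_n},\hphi(x_{\s_n}),\hphi)\le 0$, and the consistency estimate \eqref{est_conv}, together with the hypothesis $\dx_n/h_n\to 0$ and upper semicontinuity of $H$, yields $H(x_0,\bar u(x_0),D\phi(x_0))\le 0$ in the limit; the supersolution inequality for $\underline u$ follows symmetrically.

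The novel step is the verification of the tangential subsolution condition on each stratum. Fix $(k,j)$, a test function $\phi\in C^1(\Man{k,j})$, and a strict local maximum $x_0\in \Man{k,j}$ of $\bar u-\phi$ on $\Man{k,j}$. Extend $\phi$ smoothly and boundedly to $\tilde\phi$ on $\R^N$ and add a penalty $\Lambda\,\dist(\cdot,\Man{k,j})^2$, which is smooth thanks to the flatness of the stratification; for $\Lambda$ large, maximizers of $\bar u-(\tilde\phi+\Lambda\,\dist(\cdot,\Man{k,j})^2)$ localize on $\Man{k,j}$. Since by adaptedness the vertices of $X^\dx$ lying in $\Man{k,j}$ are $\dx$-dense in $\Man{k,j}$, we obtain discrete maximizers $x_{\s_n}\in X^{\dx_n}\cap \Man{k,j}$ with $x_{\s_n}\to x_0$. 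Using $(k,j)\in L(x_{\s_n})$, the scheme \eqref{scheme_fully} gives
\begin{equation*}
u_{h_n,\dx_n}(x_{\s_n}) \le \inf_{\alpha\in A^{k,j}}\Big\{(1-c^{k,j}h_n)\sum_{\t} I^{k,j}_{\s_n\t}(\alpha)\,u_{h_n,\dx_n}(x_\t) + h_n\,\ell^{k,j}(x_{\s_n},\alpha)\Big\}.
\end{equation*}
The key tangential property is that, because $b^{k,j}(x_{\s_n},\alpha)\in T_{x_{\s_n}}\Man{k}$ and $\Man{k,j}$ is locally affine, $x_{\s_n}+h_n b^{k,j}(x_{\s_n},\alpha)\in\Man{k,j}$ for $n$ large; by adaptedness, $I^{k,j}_{\s_n\t}(\alpha)=0$ whenever $x_\t\notin\overline{\Man{k,j}}$, so the tangential discrete inequality decouples from the surrounding higher-dimensional strata. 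Repeating the argument of the previous paragraph with $H^{k,j}$ in place of $H$ (and letting $\Lambda\to\infty$ after $n\to\infty$) yields $H^{k,j}(x_0,\bar u(x_0),D\phi(x_0))\le 0$.

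In the stratified framework supersolutions require no further conditions, so $\bar u$ and $\underline u$ are, respectively, a stratified subsolution and a stratified supersolution of \eqref{HJS}. By Theorem \ref{thm:uniq_strati}, $\bar u\le \underline u$ on $\R^N$; combined with $\underline u\le \bar u$, we conclude $\bar u=\underline u=\Um$, and continuity of $\Um$ upgrades the relaxed-limit equality to local uniform convergence of $\uh$ to $\Um$. The main obstacle is the tangential consistency step: the scheme at a vertex $x_\s$ minimizes over all $(k',j')\in L(x_\s)$, possibly including higher-dimensional strata, so isolating the tangential Hamiltonian $H^{k,j}$ requires both the flatness of $\Man{k,j}$ (keeping $x_\s+hb^{k,j}(x_\s,\alpha)$ on $\Man{k,j}$) and the adaptedness of the triangulation (confining the interpolation to vertices of $\overline{\Man{k,j}}$), and must be combined with a normal penalty to trap the discrete maximizers on the submanifold rather than letting them escape into a surrounding stratum.
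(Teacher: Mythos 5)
Your overall architecture --- half-relaxed limits, Barles--Souganidis consistency for the standard sub/supersolution properties, an extra tangential subsolution check, then the stratified Comparison Principle --- is exactly the paper's. The genuine gap is in how you produce the points at which the $(k,j)$-branch of the scheme can be invoked. You localize the maximum by adding the penalty $\Lambda\,\dist(\cdot,\Man{k,j})^2$ and maximizing over $\R^N$ (or over all vertices). For any finite $\Lambda$ such a maximizer is only \emph{close} to $\Man{k,j}$, not on it; and at a vertex $x_\s\notin\overline{\Man{k,j}}$ one has $(k,j)\notin L(x_\s)$, so the scheme at $x_\s$ contains no $\Sigma^{k,j}$ term and yields no information about $H^{k,j}$. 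Sending $\Lambda\to\infty$ after $n\to\infty$ does not repair this: for fixed $\Lambda$ the inequality you can extract involves the Hamiltonians of the surrounding strata evaluated at gradients carrying a normal component of size $O(\Lambda\,\dist(x_\Lambda,\Man{k,j}))$, which in general does not vanish as $\Lambda\to\infty$; absorbing it is precisely the normal-controllability argument of the comparison proof, not a consequence of consistency. So the step ``we obtain discrete maximizers $x_{\s_n}\in X^{\dx_n}\cap\Man{k,j}$'' is unjustified as written.

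The fix --- and what the paper actually does --- is to drop the penalty and take maximum points of $\ud-\phi$ \emph{restricted to} $\Man{k}$ near $x_0$. Such a point $\xd$ need not be a grid vertex, so one writes $\xd=\sum_\s\beta_\s(\xd)x_{\Delta,\s}$, where by adaptedness every vertex with $\beta_\s(\xd)\neq 0$ lies in $\overline{\Man{k}}$, forms the convex combination $\sum_\s\beta_\s(\xd)\,\Sigma(h,\dx,\xdj,\ud(\xdj),\ud)=0$, bounds it below by the $\Sigma^{k,p}$ branch, and uses the restricted maximality together with the fact that $\xdj+h\,b^{k,p}(\xdj,\alpha)$ stays in $\Man{k,p}$ to replace $\ud$ by $\phi$ in the difference quotient. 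Your two key observations --- flatness keeps the foot of the characteristic on the stratum, and adaptedness confines the interpolation to vertices of $\overline{\Man{k,j}}$ --- are correct and are exactly the ingredients needed; they just have to be applied to maximizers taken on the submanifold itself (equivalently, over the vertices lying in $\Man{k,j}$, followed by interpolation), not to penalized unrestricted maximizers.
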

\begin{proof}
	The proof is based again on the classical technique of half-relaxed limits introduced in \cite{BaSo}. We define 
	\begin{equation*}
		\underline{u}(x):={\liminf_{\genfrac{}{}{0pt}{1}{h,\dx\to 0 }{y\to x}}}{}_{ *}  \: \uh(y),  \quad \quad \overline{u}(x):={\limsup_{\genfrac{}{}{0pt}{1}{h,\dx\to 0 }{y\to x}}}{}^{*}   \:  \uh(y) \: 
	\end{equation*}
and we denote by $H^*$, $H_*$ the upper and lower semicontinuous envelopes of the Hamiltonian $H$ (see \cite{Ba} for the previous definitions).\\
We claim that $\underline{u}$, $\overline{u}$ are a supersolution and, respectively, a subsolution to \eqref{HJS} in the standard viscosity sense. Indeed, by \eqref{est_conv}, for any $x\in\R^N$, for any smooth function $\phi$ and for any sequence $x_\s\to x$ for $h,\dx\to 0$ with $ \dx/h\to 0$, we have
\begin{align*}
	&\limsup_{h,\dx\to 0 } \, \Sigma(h,\dx, x_{\s},\hphi(x_{\s}),\hphi)\\
	&\le \limsup_{h,\dx\to 0 } \left[H(x_\s, \phi(x_\s),D\phi(x_\s))+C_\phi \left(h+\frac{\dx}{h}\right)\right]\\
	&\le H^*(x, \phi(x_\s), D\phi(x))=H(x,\phi(x),D\phi(x))				
\end{align*}
(recall that $H$ is usc) and similarly
\[\limsup_{h,\dx\to 0 } \, \Sigma(h,\dx, x_{\s},\hphi(x_{\s}),\hphi)\ge   H_*(x,\phi(x),D\phi(x)).\]	
Hence, the scheme \eqref{scheme_S1} is consistent, in viscosity solution sense, with \eqref{HJS} and the claim is a direct consequence of the convergence result in \cite[Theorem 2.1]{BaSo}.\\
If we show that $\overline{u}$ is a stratified subsolution, then the comparison principle (see \cite[Theorem 5.2.]{BaCh_AEYW} and \cite[Theorem 21.3.1]{BaCh_book}) implies $\overline{u}\le \underline{u}$.
	Being the reverse inequality straightforward and taking into account  Theorem \ref{thm:uniq_strati}, we get $\overline{u}= \underline{u}=\Um$,  and therefore the local uniform convergence of $\uh$ to $\Um$. Hence, it remains to show that  for any smooth  test function $\phi $ such that  $\overline{u}-\phi$ has a local (strict) maximum point  at $x_0 \in \Man{k}$  on $\Man{k}$ with $\overline{u}(x_0)=\phi(x_0)$,  then 
	\begin{equation}\label{sub_stratified}
	 	H^k (x_0,\phi(x_0), D\phi (x_0)) \leq  0 \; .
	\end{equation}
Since the argument is local,   we can consider a neighborhood of $x_0$ small enough   in such a way that it contains no point of $\Man{l}$ for $l<k$ and of no other connected component  of $\Man{k}$ except   the one, denoted with $\Man{k,p}$, containing $x_0$. \\
Since $x_0$ is a maximum point for $\overline{u}-\phi$ on $\Man{k}$, there exists a sequence of maximum points $\xh\in\Man{k}$ for $\uh-\phi$ such that $\xh\to x_0$ and $\uh(\xh)\to \overline{u}(x_0)$  for $h,\dx\to 0$. To simplify the notation, we write $\uh=u_\Delta$, $\xh=\xd$, where  $\Delta\to 0$ for $h,\dx\to 0$. Since the triangulation is adapted to the stratification, we have $\xd=\sum_\s\beta_\s(\xd)x_{\Delta,\s}$ with $\beta_\s(\xd)\neq 0$ if and only if $x_{\Delta,\s}\in \Man{k}$ and, moreover,
$\xdj+hb^{k,p}(\xdj,\alpha)\in \Man{k,p}$ for $h$ sufficiently small. Hence
\[
\phi(\xdj+hb^{k,p}(\xdj,\alpha))-\phi(\xd)\ge
\ud(\xdj+hb^{k,p}(\xdj,\alpha))-\ud(\xd).
\]
Recalling \eqref{scheme_S1} and that $\sum_\s\beta_\s(\xd)=1$, we have 
\begin{equation}\label{stima0}
	\begin{split}
0&=\sum_\s\beta_\s(\xd)\Sigma(h,\dx, \xdj,\ud(\xdj),\ud)\\
&\ge 	\sum_\s\beta_\s(\xd) \Sigma^{k,p}(h,\dx, \xdj,\ud(\xdj),\ud)\\
&\ge c^{k,p} \ud(\xd)+\sup_{\alpha \in A^{k,p}}\Big\{-(1-c^{k,p} h)\Big[\frac{\sum_\s\beta_\s(\xd)\phi(\xdj+hb^{k,p}(\xdj,\alpha))-\phi(\xd)}{h}\Big]\\
&-\sum_\s\beta_\s(\xd)\ell^{k,p}(\xdj,\alpha)\Big\}.
\end{split}
\end{equation}
We estimate
\begin{align}
&\Big|\frac{1}{h}\sum_\s(\beta_\s(\xd)\phi(\xdj+hb^{k,p}(\xdj,\alpha))-\phi(\xd))-	D\phi (\xd)\cdot b^{k,p}(\xd,\alpha)\Big|\le C\frac{\dx}{h}+\omega(\dx)\label{stima1}\\
&|\sum_\s\beta_\s(\xd)\ell^{k,p}(\xdj,\alpha)-\ell^{k,p}(\xd,\alpha)|\le \omega(\dx)\label{stima2}
\end{align}
 for any $\Delta$, with $C=C(\phi)$ and  $\omega$ the continuity modulus of $b^{k,p}, \ell^{k,p}$.
Replacing \eqref{stima1}	and \eqref{stima2} in \eqref{stima0}, we get
\begin{align*}
	0&\ge c^{k,p} \ud(\xd)+ \sup_{\alpha\in A^{k,p}}\Big\{-b^{k,p}(\xd,\alpha)\cdot D \phi(\xd)-\ell^{k,p}(\xd,\alpha)\Big\}+o(1)\\
	&=H^k(\xd, \ud(\xd), D\phi(\xd) )+o(1),
\end{align*}
where $o(1)\to 0$ for $h,\dx\to 0$ with $\dx/h\to 0$. 
Hence, passing to the limit, we get \eqref{sub_stratified} and the claim is proved.

\end{proof}
\begin{rem}	
	If the Hamiltonian is globally defined in $\R^N$, see \eqref{Ham_global_usc},  then  the semi-discrete scheme reads as
	\[
	u(x)=\inf_{(b,c,\ell)\in\BL(x)}\{-(1-c  h)u(x+hb)-h\ell\}.
	\]
	and the fully discrete one corresponds to find a function $u\in W^\dx$ that 	satisfies the previous equation at every
	vertex $x_\s\in X^\dx$, or equivalently,
	\begin{align*} 
		u(x_\s)=\inf_{(b,c,\ell)\in\BL(x_\s)}\Big\{(1-c h)
		\sum_{\t\in \N}I^{k,j}_{\s\t}(b)u(x_\t) 	+h\ell\Big\}, \qquad x_\s\in X^\dx
	\end{align*}
	where the matrix $I^{k,j}(b)$ is given
	by
	\[I^{k,j}_{\s\t}(b)=\beta_\t\big(x_\s+h 	b\big).\]	
	The convergence analysis in Prop. \ref{prop:prop_scheme_S} and Theorem \ref{thm:convS} can be easily extended to this case.
\end{rem}
\begin{rem}
	In \cite{BBC1,BBCI}, it is characterized  the value function of a control problem where the dynamics and running cost have a discontinuity along a set of co-dimension one. More precisely, using the notation of those papers,  consider
	\begin{align*}
		\Omega_1:=\{x=(x_1,\cdots,x_N); x_N>0\},\\
		\Omega_2:=\{x=(x_1,\cdots,x_N); x_N<0\},\\
		\H:= \overline{\Omega}_1 \cap
		\overline{\Omega}_2 = \left\{  x \in \R^N  \: : \: x_N=0 \right\}.
	\end{align*}
	and the Hamiltonians
	\begin{align*}  
		&H_1(x,r,p):=\sup_{ \alpha_1 \in A_1} \left\{  -b_1(x,\alpha_1) \cdot p+ r  - \ell_1(x,\alpha_1) \right\}\,,	\\  
		&H_2(x,r,p):=\sup_{ \alpha_2 \in A_2} \left\{  -b_2(x,\alpha_2) \cdot p+r  - \ell_2(x,\alpha_2) \right\}\,,
	\end{align*}    
	$$ H(x,r,p):= \left\{\begin{array}{ll}H_1(x,r,p) & \hbox{if $x\in \Omega_1$},\\[10pt]
		H_2(x,r,p) & \hbox{if $x\in \Omega_2$}.\end{array}\right.$$ 
	Since the Hamiltonian $H$ has a discontinuity along the hyperplane $\H$, the  standard definition of viscosity sub and supersolutions  for 
	\[
	H(x, u ,D u)=0 \quad \text{on}\quad \R^N 
	\]
	reads as (see \cite{BCD,Ba})
	\begin{equation}\label{eqn-minus}
		\begin{cases}
			  H_1(x,u,Du) = 0     &   \hbox{ in   }\Omega_1 \,, \\[10pt]
			   H_2(x,u,Du) = 0     &   \hbox{ in   }\Omega_2\,, \\[10pt]
			\min\{   H_1(x,u,Du),   H_2(x,u,Du)\}\leq0  &\hbox{on   } \H \;, \\[10pt]
			\max\{   H_1(x,u,Du),  H_2(x,u,Du)\}\geq 0 &  \hbox{on   } \H.   
		\end{cases} 
	\end{equation}
	The conditions induced  on $\H$ by \eqref{eqn-minus} are not enough to ensure uniqueness of the solution and there may exists several viscosity solution of the problem. In \cite{BBC1,BBCI,BaCh_book}, different definitions to characterize the value function as the unique solution of the previous problem are introduced. Another possibility is to interpret \eqref{eqn-minus} in the stratified sense, with a stratification given by   $\Man{N-1}=\H$, $\Man{N}=\R^N\setminus \H$. Introduce the tangential Hamiltonian
	\begin{align}\label{def:HamHT} 
		\HT(x,r,p):=\sup_{a\in A_\H(x)} \big\{  -b_\H(x,\a)\cdot p_\H +r -  l_\H(x,\a)   \big\}\,,
	\end{align} 
	with  
	\begin{align*} 
		&	b_\H\big(x,a)=b_\H\big(x,(\alpha_1, \alpha_2, \mu)\big):=\mu
		b_1 (x,\alpha_1) + (1-\mu)b_2(x,\alpha_2)\,,\\[10pt]  
		&	\ell_\H(x,\a)=\ell_\H\big(x,(\alpha_1, \alpha_2, \mu)\big):=\mu
		\ell_1 (x,\alpha_1) + (1-\mu)\ell_2(x,\alpha_2)\,, \\[10pt]
		&	A_\H(x):=\Big\{(\alpha_1, \alpha_2, \mu)\in A_1\times A_2\times [0,1]:
		b_\H\big(x,(\alpha_1, \alpha_2, \mu)\big)\cdot
		e_N(x) = 0\Big\}\, .
	\end{align*}
	and consider on $\H$ the Hamilton-Jacobi equation  
	\begin{equation}\label{sub_H}
		\HT(x,u, D_\H u)= 0\quad \text{on}\quad \H.
	\end{equation}
	Then,  Theorem \ref{thm:uniq_strati} implies that $\Um$ is the unique stratified solution of \eqref{HJS}
	with
	$H^{N-1}(x,r,p)=H_T(x,r,p)$ for $x\in\H$. The fully discrete approximation scheme \eqref{scheme_fully} reads in this case as 
	\[
	u(x_\s)=
	\begin{cases}
		\dis{\inf_{\alpha\in A_i}\{-(1-  h)u(x_\s+hb_i(x_\s,\alpha))+h\ell_i(x_\s,\alpha)\}}&x_\s\in \Omega_i,\,i=1,2,\\[10pt]
		\dis{\min_{i=1,2}\,\inf_{\alpha\in A_i}\{  -(1-  h)u(x_\s+hb_i(x_\s,\alpha))+h\ell_i(x_\s,\alpha)\}}&x_\s\in \H.
	\end{cases}
	\]
\end{rem} 
\section{The software \texttt{HJSD}}\label{sec:software}
In this section we introduce
\texttt{HJSD} (Hamilton-Jacobi on Stratified Domains), an easy-to-use program written in C++ we developed
for the numerical solution of Hamilton-Jacobi equations on stratified domains in two and three dimensions (the free software will be soon available at \texttt{http://www.sbai.uniroma1.it/~fabio.camilli/HJSD.html}).

\texttt{HJSD} is able to solve equations with Hamiltonians  of the form \eqref{Ham_global}, where a specific control problem can be defined on each connected component of the stratification. In this first version of the software, it is assumed that each pair (dynamics, running cost) is of the form $(b(x)a,\ell(x))$, namely both the speed function $b$ and the running cost $\ell$ only depend on the state but not on the control. On the other hand, the control $a$ takes discrete values in $A^1=[-1,1]$ or $A^2=\mathcal S^1=\{a\in\mathbb{R}^2\,|\,|a|=1\}$ or $A^3=\mathcal S^2=\{a\in\mathbb{R}^3\,|\,|a|=1\}$, according to the dimension of the corresponding submanifold. Finally, the computational domain is a box in two or three dimensions, discretized respectively by structured triangles or tetrahedra.\\
\texttt{HJSD} takes in input \texttt{.hjsd} files, which are simple text files containing all the relevant information for the definition of the stratification and of the corresponding control problems. More precisely, \texttt{.hjsd} files are formatted as follows.\\\\
For two dimensional problems, we have the syntax:\\\\
\texttt{
	\#HJSD2D $N_x$ $N_y$ $x_{\min}$ $x_{\max}$ $y_{\min}$ $y_{\max}$ $N_{A_1}$ $N_{A_2}$\\ 
	\#P $x^P$ $y^P$ $\ell^P$ $c^P$\\
	$\dots$ (other points)\\
	\#LX $x^{LX}$ $y_0^{LX}$ $y_1^{LX}$ $b^{LX}(x,y)$ $\ell^{LX}(x,y)$ $c^{LX}$\\
	$\dots$ (other X-lines)\\
	\#LY $y^{LY}$ $x_0^{LY}$ $x_1^{LY}$ $b^{LY}(x,y)$ $\ell^{LY}(x,y)$ $c^{LY}$\\
	$\dots$ (other Y-lines)\\
 	\#S $x^S$ $y^S$ $b^S(x,y)$ $\ell^S(x,y)$ $c^S$\\
	$\dots$ (other surfaces)\\\\
}
meaning that\\\\
$(i)$ \texttt{\#HJSD2D} defines a stratification of $\R^2$ using the computational box $[x_{\min},x_{\max}]\times[y_{\min},y_{\max}]$, discretized by means of a uniform structured grid with $N_x\times N_y$ nodes. Each cell is divided into two triangles to reconstruct the solution via linear interpolation. Moreover, the control sets $A^1$ and $A^2$ are uniformly discretized with $N_{A_1}$ and  $N_{A_2}$ elements respectively.\\\\
$(ii)$ \texttt{\#P} defines a connected component of the submanifold $\mathbf{M}^0$, namely a point with coordinates $(x^P,y^P)$ (actually the closest grid point). Moreover, $\ell^P$ and $c^P$ are, respectively, the constant running cost and discount factor corresponding to the point (no dynamics on zero-dimensional submanifolds, since the related tangent space is trivial).\\\\
$(iii)$ \texttt{\#L(X)(Y)} defines a connected component of the submanifold $\mathbf{M}^1$, namely a coordinate (X)(Y)-line, 
specified by a constant coordinate in one direction and two coordinates in the remaining direction (the line is actually projected on the grid). Note that, if one or both end-points fall out the computational box, the line is considered respectively as semi-infinite and infinite, otherwise it is a segment. The end-points are then excluded in order to obtain an approximation of an open set, according to the definition of $\mathbf{M}^1$: it is up to the user to possibly define the end-points as elements of $\mathbf{M}^0$. Moreover, $b^{L(X)(Y)}(x,y)$ and $\ell^{L(X)(Y)}(x,y)$ are symbolic functions of the variables $x,y$ (parsed and evaluated using the free library \emph{ExprTk} \cite{EXPRTK}) defining, respectively, the speed for the dynamics and the running cost corresponding to the line, while $c^{L(X)(Y)}$ is its discount factor. \\\\
$(iv)$ \texttt{\#S} defines a connected component of the submanifold $\mathbf{M}^2$, namely a flat open surface obtained using a classical Flood Fill algorithm \cite{FLOOD}, specifying a belonging point of coordinates $(x^S,y^S)$. Note that the user should specify the belonging points for all the surfaces induced by the lines in the stratification. Moreover, $b^S(x,y)$ and $\ell^S(x,y)$ are symbolic functions of the variables $x,y$ defining, respectively, the speed for the dynamics and the running cost corresponding to the surface, while $c^S$ is its discount factor. \\\\
Similarly, for three-dimensional problems, we have the following syntax:\\\\
\texttt{
	\#HJSD3D $N_x$ $N_y$ $N_z$ $x_{\min}$ $x_{\max}$ $y_{\min}$ $y_{\max}$ $z_{\min}$ $z_{\max}$ $N_{A_1}$ $N_{A_2}$ $N_{A_3}$\\ 
	\#P $x^P$ $y^P$ $z^P$ $\ell^P$ $c^P$\\
	$\dots$ (other points)\\
	\#LXY $x^{LXY}$ $y^{LXY}$ $z_0^{LXY}$ $z_1^{LXY}$ $b^{LXY}(x,y,z)$ $\ell^{LXY}(x,y,z)$ $c^{LXY}$\\
	$\dots$ (other XY-lines)\\
	\#LYZ $y^{LYZ}$ $z^{LYZ}$ $x_0^{LYZ}$ $x_1^{LYZ}$ $b^{LYZ}(x,y,z)$ $\ell^{LYZ}(x,y,z)$ $c^{LYZ}$\\
	$\dots$ (other YZ-lines)\\
	\#LXZ $x^{LXZ}$ $z^{LXZ}$ $y_0^{LXZ}$ $y_1^{LXZ}$ $b^{LXZ}(x,y,z)$ $\ell^{LXZ}(x,y,z)$ $c^{LXZ}$\\
	$\dots$ (other XZ-lines)\\
	\#SX $x^{SX}$ $y_0^{SX}$ $y_1^{SX}$ $z_0^{SX}$ $z_1^{SX}$ $b^{SX}(x,y,z)$ $\ell^{SX}(x,y,z)$ $c^{SX}$\\
	$\dots$ (other X-surfaces)\\
	\#SY $y^{SY}$ $x_0^{SY}$ $x_1^{SY}$ $z_0^{SY}$ $z_1^{SY}$ $b^{SY}(x,y,z)$ $\ell^{SY}(x,y,z)$ $c^{SY}$\\
	$\dots$ (other Y-surfaces)\\
	\#SZ $z^{SZ}$ $x_0^{SZ}$ $x_1^{SZ}$ $y_0^{SZ}$ $y_1^{SZ}$ $b^{SZ}(x,y,z)$ $\ell^{SZ}(x,y,z)$ $c^{SZ}$\\
	$\dots$ (other Z-surfaces)\\
	\#V $x^V$ $y^V$ $z^V$ $b^V(x,y,z)$ $\ell^V(x,y,z)$ $c^V$\\
	$\dots$ (other volumes)\\\\
}
with the following differences with respect to the two-dimensional case:\\\\
$(i)$ \texttt{\#HJSD3D} defines a stratification of $\R^3$ using the computational box $[x_{\min},x_{\max}]\times[y_{\min},y_{\max}]\times[z_{\min},z_{\max}]$, with additional $N_z$ nodes in the $z$ direction. Each cell of the discretization is divided into six tetrahedra to reconstruct the solution via linear interpolation. Moreover, the control set $A^3$, i.e. the unit sphere, is approximated in spherical coordinates with $N_{A_3}$ meridians and $N_{A_3}$ parallels, with a total number of $N_{A_3}\times N_{A_3}$ elements.\\\\
$(ii)$ \texttt{\#P} and  \texttt{\#L(XY)(YZ)(XZ)} define points and coordinate lines in three-dimensions as before, using additional $z$ coordinates. \\\\
$(iii)$ \texttt{\#S(X)(Y)(Z)} defines a connected component of the submanifold $\mathbf{M}^2$, namely a coordinate (X)(Y)(Z)-plane, specified by a constant coordinate in one direction and two pairs of coordinates in the remaining directions (the plane is actually projected on the grid). As for lines, such flat surfaces are considered semi-infinite, infinite or rectangles, depending on which parts of them fall out the computational box. Again, their boundaries are excluded in order to obtain approximations of open sets, according to the definition of $\mathbf{M}^2$, and it is up to the user to define the missing elements of $\mathbf{M}^0$ and $\mathbf{M}^1$. 
\\\\
$(iv)$ \texttt{\#V} defines a connected component of the submanifold $\mathbf{M}^3$, namely a volume obtained using again a classical Flood Fill algorithm in three-dimension, specifying a belonging point of coordinates $(x^V,y^V,z^V)$.\\\\ 
$(v)$ all the functions defining the different speeds for the dynamics and the running costs on the stratification, are now symbolic functions of the variables $x,y,z$.\\\\
The program checks for syntax errors in the input \texttt{.hjsd} file, then it computes an approximation of the stratified viscosity solution of the problem, and writes the results in a \texttt{vtk} file, ready for the visualization in \texttt{Paraview} \cite{PARAVIEW}. The \texttt{vtk}  file contains the geometry of the stratification, the value function and the corresponding vector field of the optimal dynamics, which can be also used to compute the optimal trajectories starting from arbitrary points (directly in \texttt{Paraview}, by means of the \texttt{StreamTracerWithCustomSource} filter). 

We now provide some details on the actual implementation of \texttt{HJSD}. The code employs the fully-discrete semi-Lagrangian scheme in fixed-point form introduced in \eqref{scheme_fully}. The user can specify at runtime the discretization step $h$ and the tolerance $\tau$ for the convergence. The most delicate part of the algorithm is the construction of a suitable data structure to store a general stratification. More precisely, we have to build, for each grid node $x_\sigma$, the map $L(x_\sigma)$ of all the indices $(k,j)$ of sub-manifolds and related connected components that compete to the node, and we have to attach to each pair $(k,j)$ the corresponding symbolic functions for the dynamics, running cost and discount factor. Then, the evaluation of the fixed-point operator is performed by exhaustively selecting the optimal value among all the $(k,j)$'s and the corresponding controls in $A^{k,j}$. We recall that the reconstruction of the solution through the interpolation  matrix $I^{k,j}$ involves the values at the vertices of a suitable element: the one containing the foot of the characteristic associated to the control $a$, starting from the node $x_\sigma$ and moving for a single time step $h$ along the direction provided by the vector field $b(x_\sigma)a$. Whenever the characteristic falls out the computational box, the corresponding control is penalized so that the optimal one will always point inside the domain. This procedure is straightforward on structured triangular/tetrahedral grids, as the ones considered here, but it can be implemented efficiently also on unstructured grids, see e.g. \cite{CACACE-FERRETTI-22}. Moreover, the computation is intrinsically parallel, since each node can be assigned to a single processor, and a single synchronization is required at each fixed-point iteration. This effectively mitigates the computational effort due to the exhaustive search of the minimum value in the solution update.\\
A CUDA-GPU version of the code is currently under development, including general stratifications with arbitrary (non coordinate) lines and planes on adapted unstructured grids, more general Hamiltonians as in \eqref{Ham_global}, and also suitable descent methods for the computation of optimal controls. 
\section{Numerical experiments}\label{sec:numerics}
In this section, we report and discuss the numerical results obtained by the software \texttt{HJSD}, showing the features of the proposed method in different scenarios. 

Let us start by defining the setup for some parameters common to all the experiments, the other ones will be tuned ad-hoc for the single tests.\\ 
For the first three simulations in two dimensions, we choose the computational box  $[x_{\min},x_{\max}]\times[y_{\min},y_{\max}]=[-1,1]^2$, discretized by $N_x\times N_y=201\times201$ nodes, whereas the control sets $A^1$ and $A^2$ are respectively approximated by $N_{A_1}=3$ and $N_{A_2}=64$ elements. We also fix the tolerance for the convergence $\tau=10^{-6}$, while the discretization step is chosen of order $h=\mathcal{O}(\sqrt{\Delta x})$ to guarantee the convergence of the scheme (Theorem \ref{thm:convS}).\\
On the other hand, for the last simulation in three dimensions, we choose  $[x_{\min},x_{\max}]\times[y_{\min},y_{\max}]\times[z_{\min},z_{\max}]=[-1,1]^3$, $N_x\times N_y\times N_z=101\times101\times 101$, $N_{A_1}=3$, $N_{A_2}=32$ and $N_{A_3}=32$.\\  
Finally, in all the experiments and for visualization convenience, we adopt the convention that the dynamics speed on the zero dimensional submanifold is depicted with a zero value, despite no dynamics is defined on $\mathbf{M}^0$ at all.

\noindent{\bf Test 1.} We consider the stratification of $\R^2$ obtained combining a horizontal segment, its end-points and an additional isolated point. More precisely, we set 
$$\mathbf{M}^0=\{P_0\}\cup\{ P_{1,0}\}\cup \{P_{1,1}\}$$
with
$$P_0=\left(0,\frac34\right),\quad P_{1,0}=\left(-\frac12,0\right),\quad P_{1,1}=\left(\frac12,0\right)\,,$$  $$\mathbf{M}^1=L:=\left(-\frac12,\frac12\right)\times\{0\}\quad\mbox{ and } \quad \mathbf{M}^2=S:=\R^2\setminus(\overline{\mathbf{M}^0\cup\mathbf{M}^1})\,.$$
On each connected component of these submanifolds, we define the dynamics, the running cost and the discount factor according to the following table:
\begin{center}
\begin{tabular}{|c|c|c|c|}
\hline
     Connected component & Dynamics & Running Cost & Discount factor  \\
     \hline
\rule{0pt}{2.5ex}     $P_0$ & - & $\ell^{P_0}=0$ & $c^{P_0}=1$\\
      \hline
 \rule{0pt}{2.5ex}    $P_{1,i}\quad i=0,1$ & - & $\ell^{P_{1,i}}=2$ & $c^{P_{1,i}}=10^{-4}$\\
      \hline
 \rule{0pt}{2.5ex} $L$ & $b^{L}(x,y)\equiv 1$ & $\ell^{L}(x,y)=\frac14(1+4|x|)$ & $c^{L}=10^{-4}$\\
 \hline
 \rule{0pt}{2.5ex} $S$ & $b^{S}(x,y)\equiv 1$ & $\ell^{S}(x,y)\equiv 5$ & $c^{S}=10^{-4}$\\
 \hline
\end{tabular}
\end{center}
This stratification is reported in Figure \ref{Test1}-a, where the color-map represents the values of the running cost on the different submanifolds (while the dynamics, excluding $\mathbf{M}^0$, has everywhere a unitary speed). In practice, the point $P_0$ is an absolute minimizer for the global running cost, and it acts as a target point. On the other hand, the segment $L$ has a strictly positive running cost, ranging in the interval $(\frac14,\frac34)$, and a local minimum at its middle point. Finally, the points $P_1$, $P_2$ and the remaining surface $S$ have an higher and higher running cost.\\ 
In Figure \ref{Test1}-b we report the computed optimal dynamics, while Figure \ref{Test1}-c shows the value function, its level sets and some optimal trajectories. We remark that all the optimal trajectories are forced to eventually reach $P_0$, as in a minimum time problem, and this is implied by the fact that, with the exception of $c^{P_0}$, all the discount factors are close to zero. Nevertheless, we observe that, due to its much more favorable cost, the segment $L$ can temporarily attract some trajectories, also increasing their travelled distance. This results in the creation of a shock which splits the domain in two parts, connected only in a neighborhood of the middle point of $L$, see the surface of the solution in Figure \ref{Test1}-d.\\
Finally, we observe that the contribution of $P_1$ and $P_2$ is irrelevant, both control problems on them are replaced by the one on $L$, extended to its closure as described in Section \ref{sec:semi_discrete}, which is again more favorable.   
\begin{figure}[!h]
\centering{
\begin{tabular}{cc}
	\includegraphics[width=0.49\textwidth]{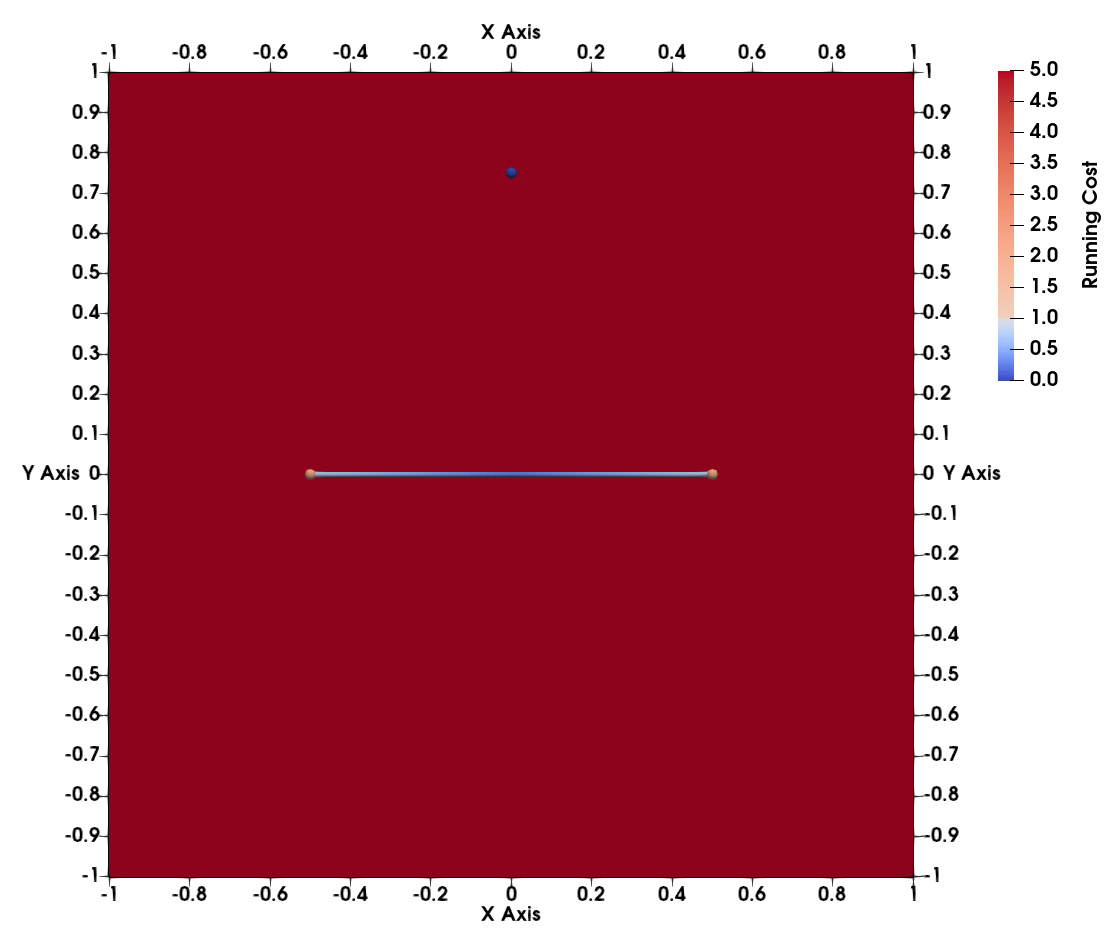} 
	&
	\includegraphics[width=0.49\textwidth]{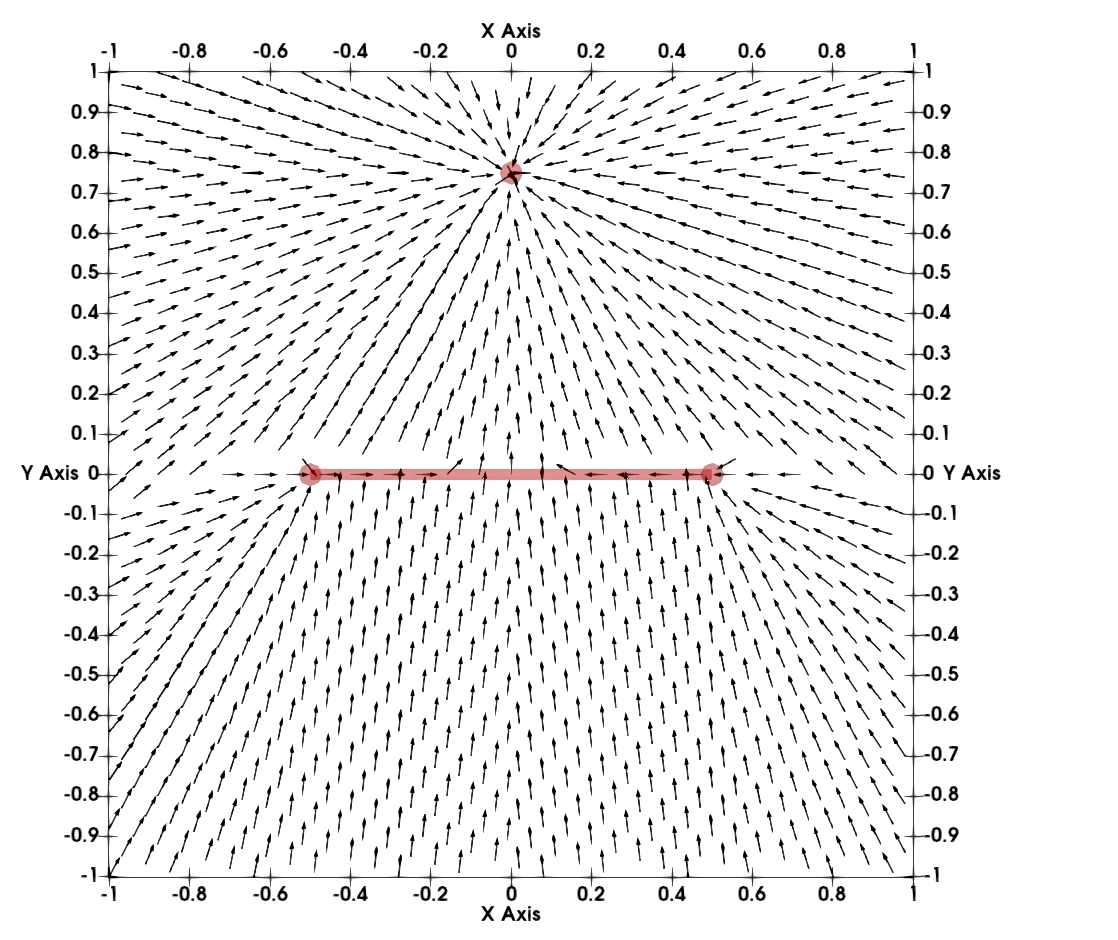} \\
	(a) & (b)\\[10pt]
	\includegraphics[width=0.49\textwidth]{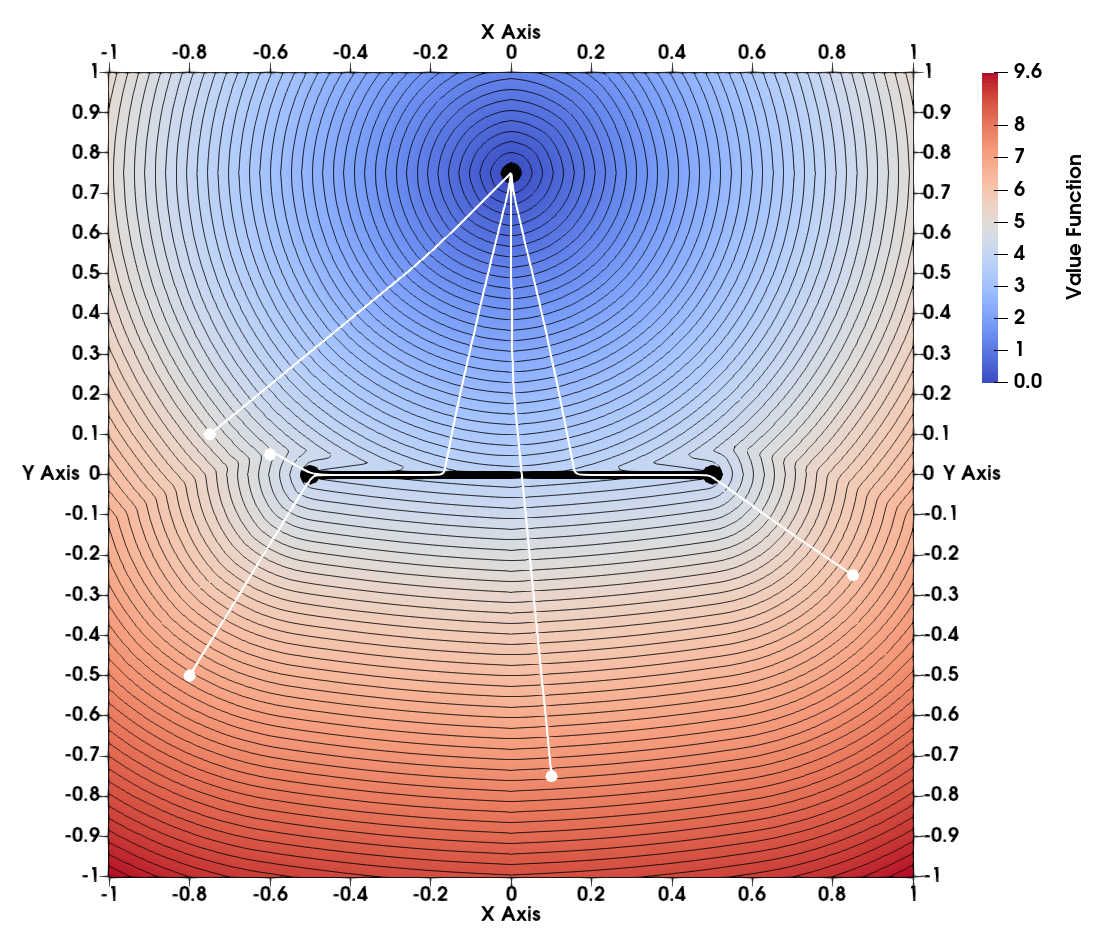} 
	&
	\includegraphics[width=0.49\textwidth]{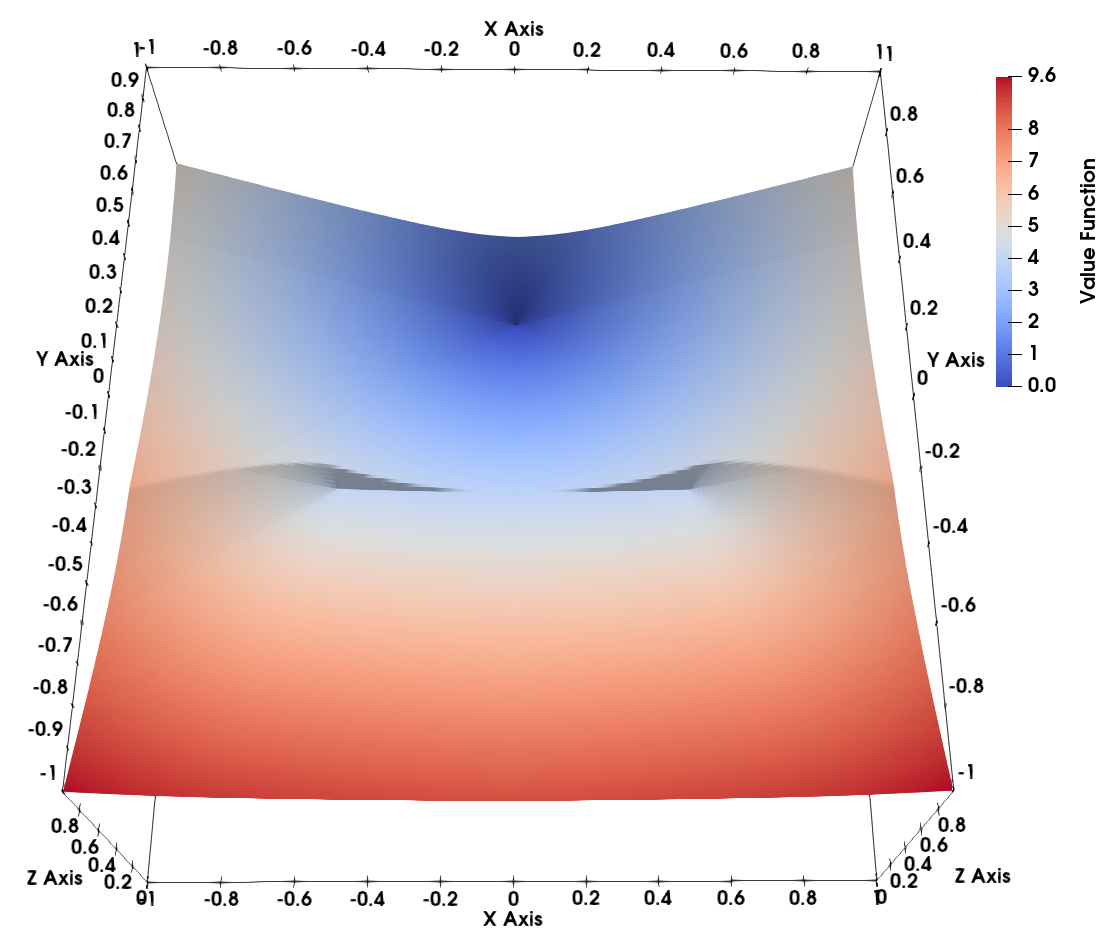} \\
(c) & (d)
\end{tabular}
\caption{Stratification for Test 1 (a), optimal dynamics (b), solution values and optimal trajectories (c), solution surface (d).}\label{Test1}}
\end{figure}

\noindent{\bf Test 2.}
We consider a more complex stratification of $\R^2$, obtained combining two vertical segments, their end-points and an additional isolated point. More precisely, we set $$\mathbf{M}^0=\{P_0\}\cup\{ P_{1,0}\}\cup\{ P_{1,1}\}\cup \{P_{2,0}\}\cup\{ P_{2,1}\}$$
with
$$P_0=\left(0,\frac34\right),\quad P_{1,0}=\left(-\frac12,-\frac12\right),\quad P_{1,1}=\left(-\frac12,\frac12\right),\quad P_{2,0}=\left(\frac12,-\frac12\right),\quad P_{2,1}=\left(\frac12,\frac12\right),$$
while
$$\mathbf{M}^1=L_0\cup L_1$$
with
$$L_0=\left\{-\frac12\right\}\times\left(-\frac12,\frac12\right),\quad L_1=\left\{\frac12\right\}\times\left(-\frac12,\frac12\right)$$
and 
$$\mathbf{M}^2=S:=\R^2\setminus(\overline{\mathbf{M}^0\cup\mathbf{M}^1})\,.$$
On each connected component of these submanifolds, we define the dynamics, the running cost and the discount factor according to the following table:
\begin{center}
\begin{tabular}{|c|c|c|c|}
\hline
     Connected component & Dynamics & Running Cost & Discount factor  \\
     \hline
\rule{0pt}{2.5ex}     $P_0$ & - & $\ell^{P_0}=0$ & $c^{P_0}=1$\\
      \hline
 \rule{0pt}{2.5ex}    $P_{i,j}\quad i=1,2\quad j=0,1$ & - & $\ell^{P_{i,j}}=1$ & $c^{P_{i,j}}=10^{-4}$\\
      \hline
 \rule{0pt}{2.5ex}$L_0$ & $b^{L_0}(x,y)\equiv 2$ & $\ell^{L_0}(x,y)\equiv 1$ & $c^{L_0}=10^{-4}$\\
 \hline
 \rule{0pt}{2.5ex} $L_1$ & $b^{L_1}(x,y)\equiv 3$ & $\ell^{L_1}(x,y)\equiv 1$ & $c^{L_1}=10^{-4}$\\
 \hline
\rule{0pt}{2.5ex}  $S$ & $b^{S}(x,y)\equiv 1$ & $\ell^{S}(x,y)\equiv 1$ & $c^{S}=10^{-4}$\\
 \hline
\end{tabular}
\end{center}
This stratification is reported in Figure \ref{Test2}-a, where now the color-map represents the values of the speed functions on the different submanifolds (while the running cost, excluding $\mathbf{M}^0$, is everywhere equal to one). 
Again, due to the values close to zero of all the discount factors (with the exception of $c^{P_0}$), this example resembles a minimum time problem with target $P_0$, but on the two vertical segments $L_0$ and $L_1$ we have speeds higher than the one on $S$. Hence, we can expect $L_0$ and $L_1$ to behave as fast tracks for the optimal trajectories.\\
This is confirmed by the numerical results reported in Figure \ref{Test2}-b (optimal dynamics) and in Figure \ref{Test2}-c (value function, level sets and optimal trajectories). It is worth noting that the solution is not symmetric with respect to the vertical axis, the higher speed on $L_1$ attracts more points than $L_0$. Moreover, we observe that the shock in the solution produced by $L_1$ ends up exactly at the end-point $P_{2,1}$, while the one produced by $L_0$ stops shortly before the end-point $P_{1,1}$ (see also Figure \ref{Test2}-d). This explains why all the optimal trajectories, running on $L_1$, leave the fast track at $P_{2,1}$, before pointing to $P_0$ along a straight line, while the ones running on $L_0$ leave it slightly before $P_{1,1}$. The result clearly depends on the balance between the different control problems involved in a neighborhood of $P_{1,1}$.\\ 
Finally, we observe that, also in this test, the contribution of the end-points of the two segments results irrelevant for global control problem on the stratification.   

\begin{figure}[!h]
\centering{
\begin{tabular}{cc}
	\includegraphics[width=0.49\textwidth]{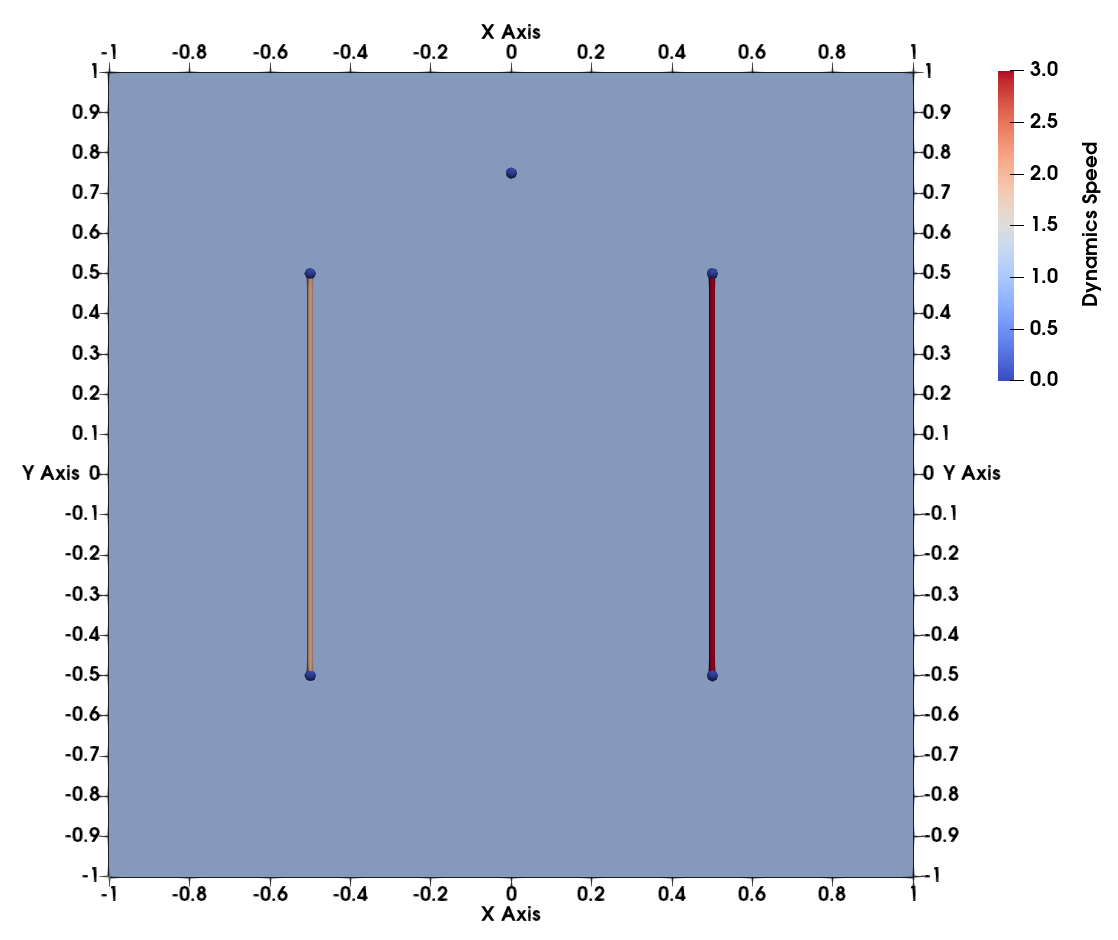} 
	&
	\includegraphics[width=0.49\textwidth]{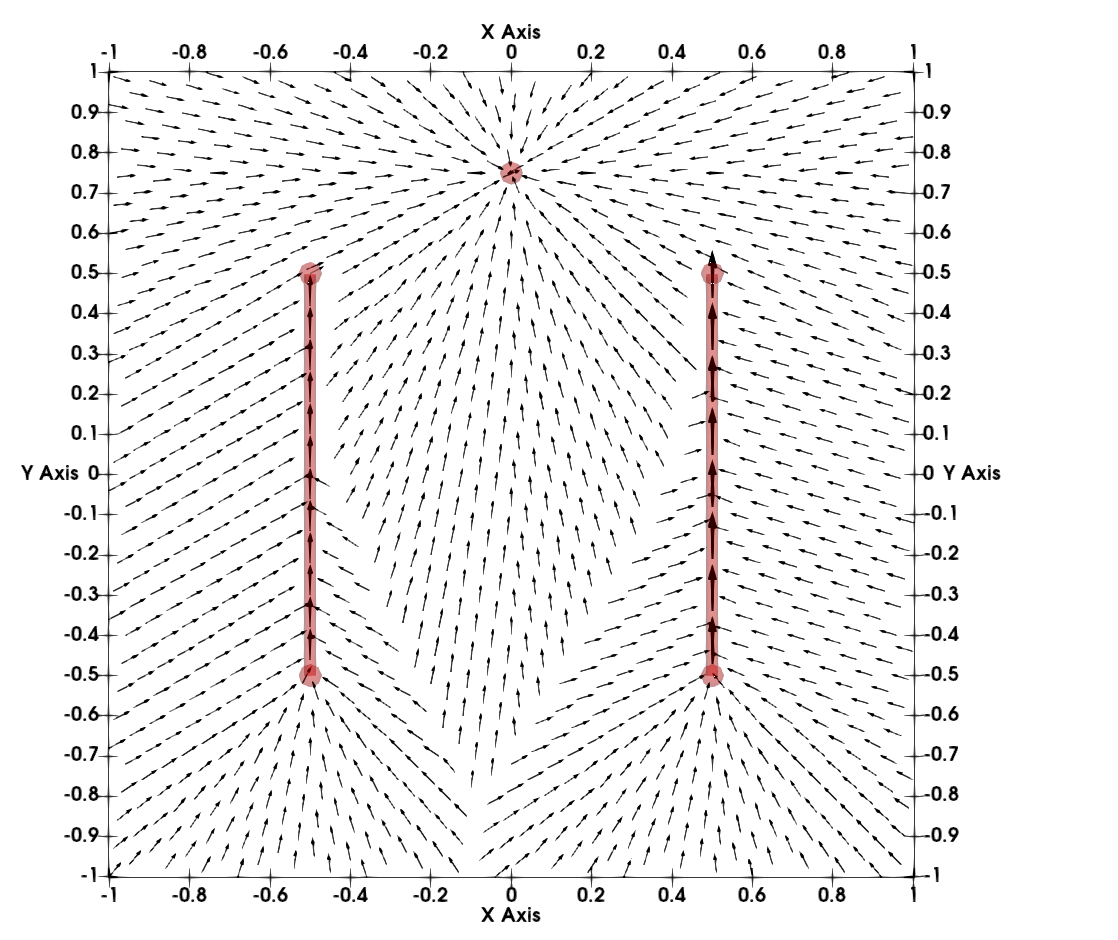} \\
	(a) & (b)\\[10pt]
	\includegraphics[width=0.49\textwidth]{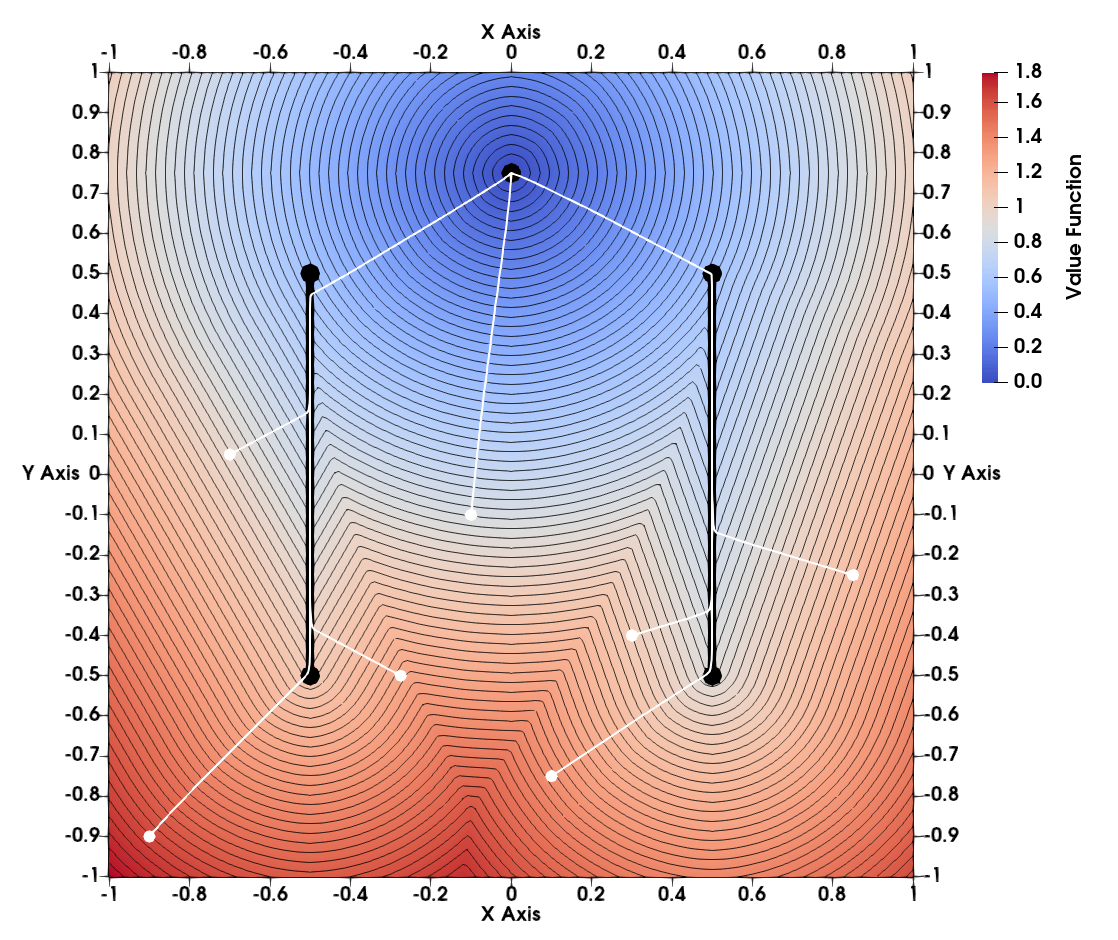} 
	&
	\includegraphics[width=0.49\textwidth]{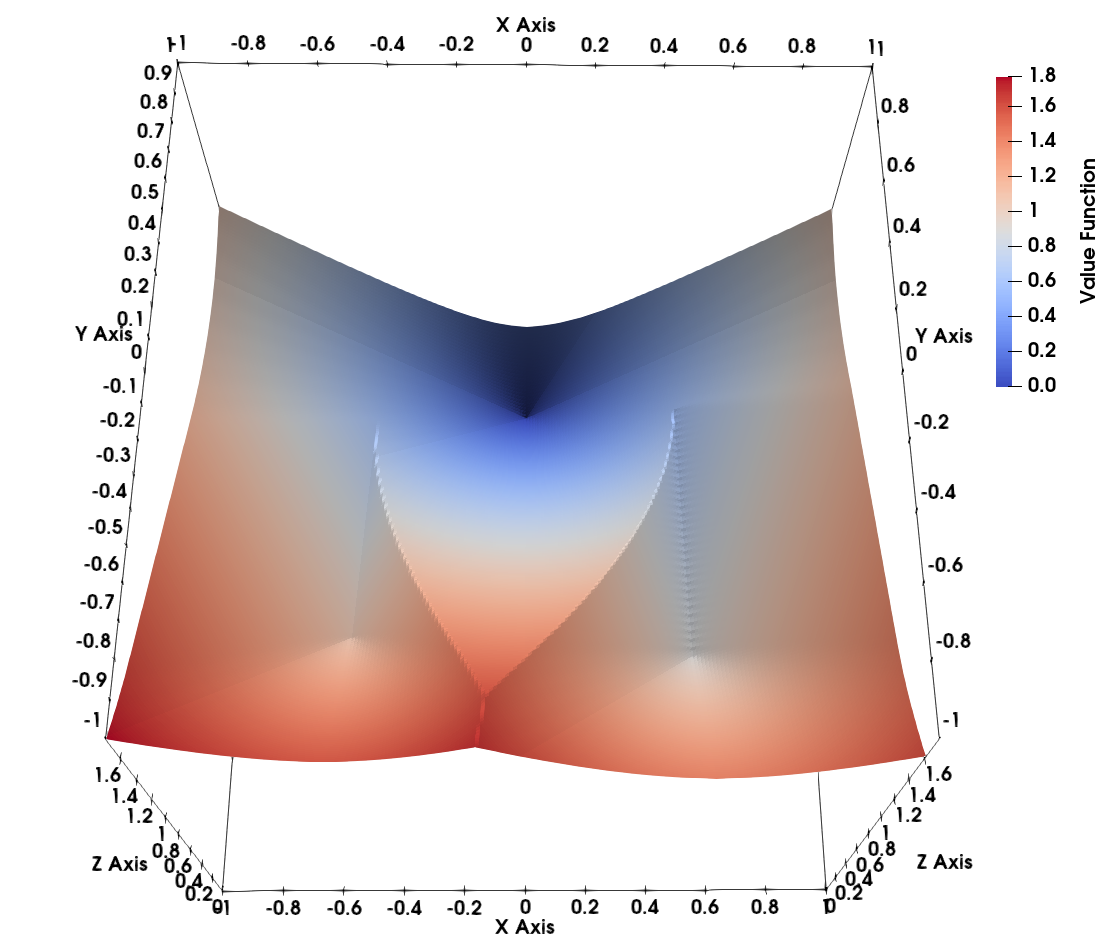} \\
(c) & (d)
\end{tabular}
\caption{Stratification for Test 2 (a), optimal dynamics (b), solution values and optimal trajectories (c), solution surface (d).}\label{Test2}}
\end{figure}

\noindent{\bf Test 3.}
We consider an even more complex stratification of $\R^2$, obtained combining the sides of a square and its corner points. Differently from the previous tests, here we also have two connected components for the two dimensional submanifold. More precisely, we set $$\mathbf{M}^0=\{P_0\}\cup\{ P_1\}\cup\{ P_2\}\cup \{P_3\}$$
with
$$P_0=\left(-\frac34,-\frac34\right),\quad P_1=\left(\frac34,-\frac34\right),\quad P_2=\left(\frac34,\frac34\right),\quad P_3=\left(-\frac34,\frac34\right),$$
while
$$\mathbf{M}^1=L_0\cup L_1\cup L_2 \cup L_3$$
with
\medskip
$$L_0=\left\{-\frac34\right\}\times\left(-\frac34,\frac34\right),\quad L_1=\left(-\frac34,\frac34\right)\times \left\{-\frac34\right\},$$
\medskip
$$L_2=\left\{\frac34\right\}\times\left(-\frac34,\frac34\right),\quad L_3=\left(-\frac34,\frac34\right)\times \left\{\frac34\right\}$$
and 
\medskip
$$\mathbf{M}^2=\R^2\setminus(\overline{\mathbf{M}^0\cup\mathbf{M}^1})=S_0\cup S_1$$
with
$$S_0=\left\{\|(x,y)\|_\infty<\frac34\right\},\quad S_1=\left\{\|(x,y)\|_\infty>\frac34\right\}.$$
On each connected component of these submanifolds, we define the dynamics, the running cost and the discount factor according to the following table:
\begin{center}
\begin{tabular}{|c|c|c|c|}
\hline
     Connected component & Dynamics & Running Cost & Discount factor  \\
     \hline
\rule{0pt}{2.5ex}     $P_{i}\quad i=0,1,2,3$ & - & $\ell^{P_i}=1$ & $c^{P_i}=10^{-4}$\\
 \hline
\rule{0pt}{2.5ex} $L_i\quad i=0,1,2,3$ & $b^{L_i}(x,y)\equiv 10$ & $\ell^{L_i}(x,y)\equiv 1$ & $c^{L_i}=10^{-4}$\\
 \hline
\rule{0pt}{2.75ex}  $S_0$ & $b^{S_0}(x,y)\equiv 1$ & $\ell^{S_0}(x,y)=\min\left\{ \cos(\frac83\pi x)+\cos(\frac83\pi y)+2,3\right\}$ & $c^{S_0}=1$
\\
 \hline
\rule{0pt}{2.5ex}  $S_1$ & $b^{S_1}(x,y)\equiv 1$ & $\ell^{S_1}(x,y)\equiv 1$ & $c^{S_1}=10^{-4}$\\
 \hline
\end{tabular}
\end{center}
\medskip
This stratification is reported in Figure \ref{Test3}-a, with the color-map representing the values of the running costs on the different submanifolds.\\
Starting from $S_1$,  the running cost jumps to higher values across the four segments, where we recognize eight local minimizers, two for each segment. Despite this barrier, the running cost also has four absolute minimizers inside $S_0$ which act again as target points, due to the very small discount factors on $S_1$ and $\mathbf{M}^0\cup\mathbf{M}^1$.\\
The numerical results are reported in Figure \ref{Test3}-b (optimal dynamics) and in Figure \ref{Test3}-c (value function, level sets and optimal trajectories). As expected, we observe that all the optimal trajectories eventually reach the target points, but the speed on $\mathbf{M}^1$ is so high that a longer travelled distance is always preferred, in order to approach $S_0$ at the local minimizers for the running cost. This effect also propagates inside $S_0$, in order to avoid the absolute maxima (plateaux) of $\ell^{S_0}$ located around the four corners and the middle points of the segments. The result is the creation of several shocks with triple points, see Figure \ref{Test3}-d).  
Finally, the contribution of the four corners to the optimal solution is again irrelevant.   
\begin{figure}[!h]
\centering{
\begin{tabular}{cc}
	\includegraphics[width=0.49\textwidth]{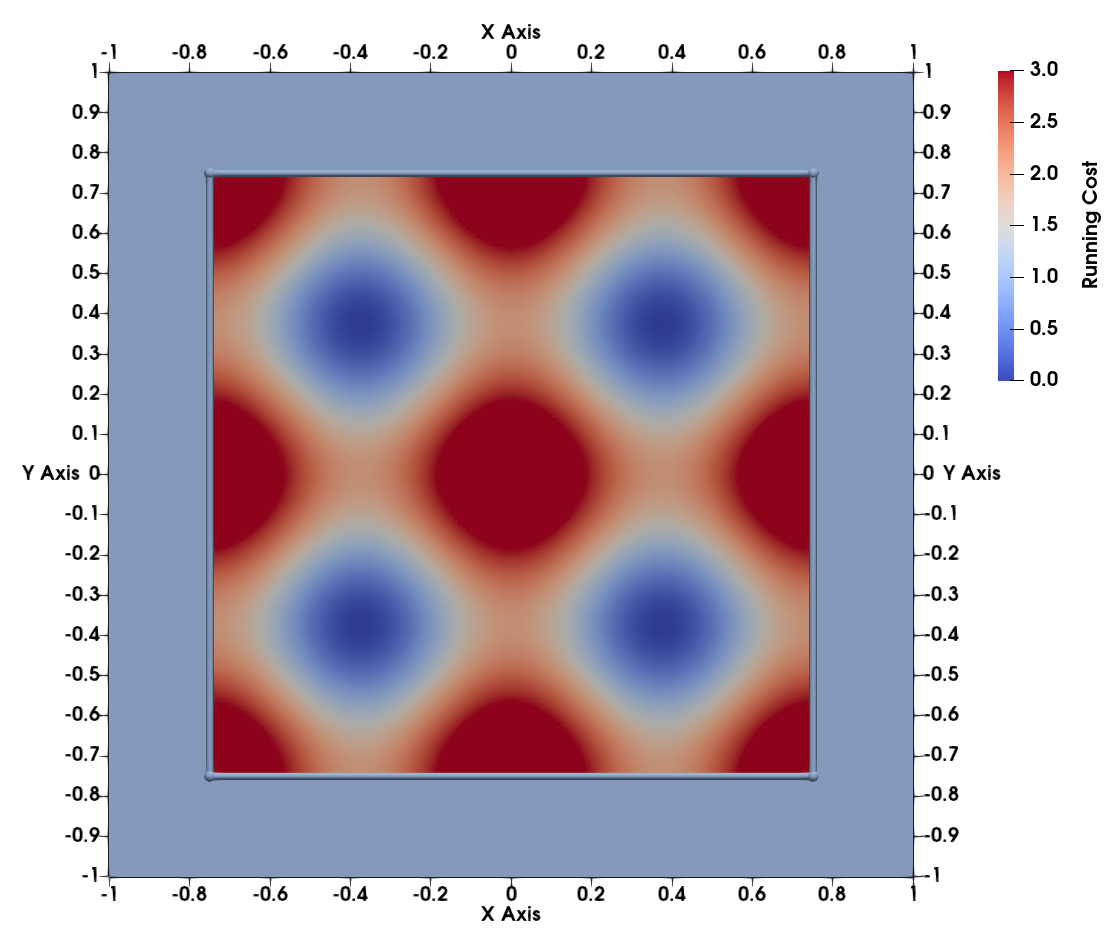} 
	&
	\includegraphics[width=0.49\textwidth]{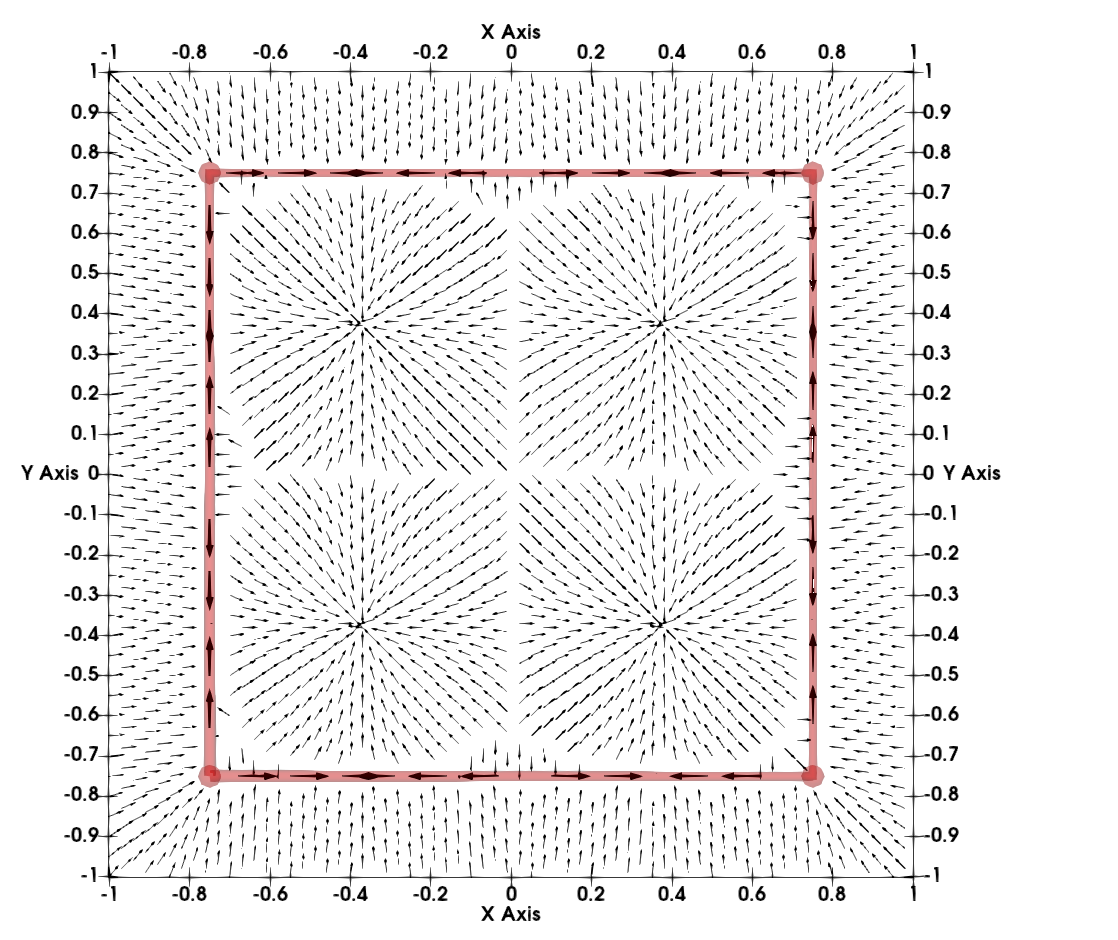} \\
	(a) & (b)\\[10pt]
	\includegraphics[width=0.49\textwidth]{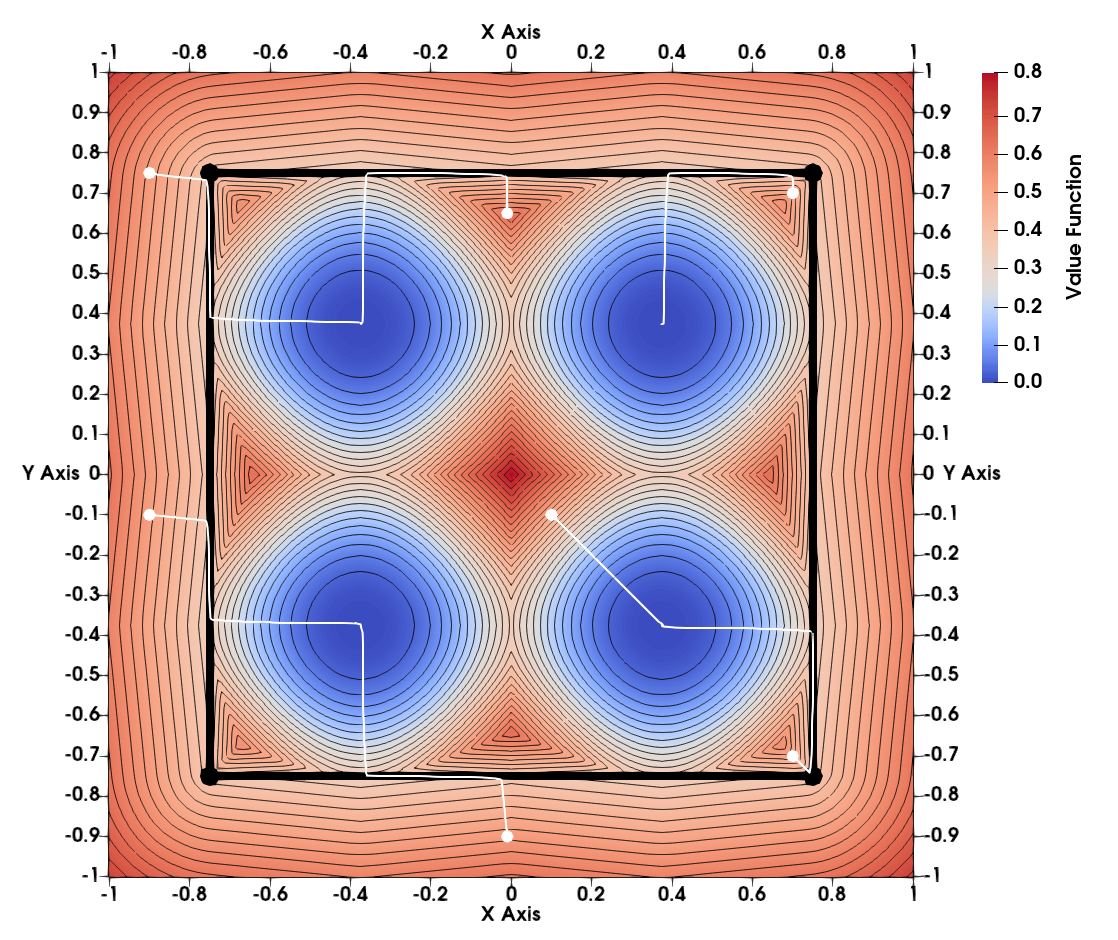} 
	&
	\includegraphics[width=0.49\textwidth]{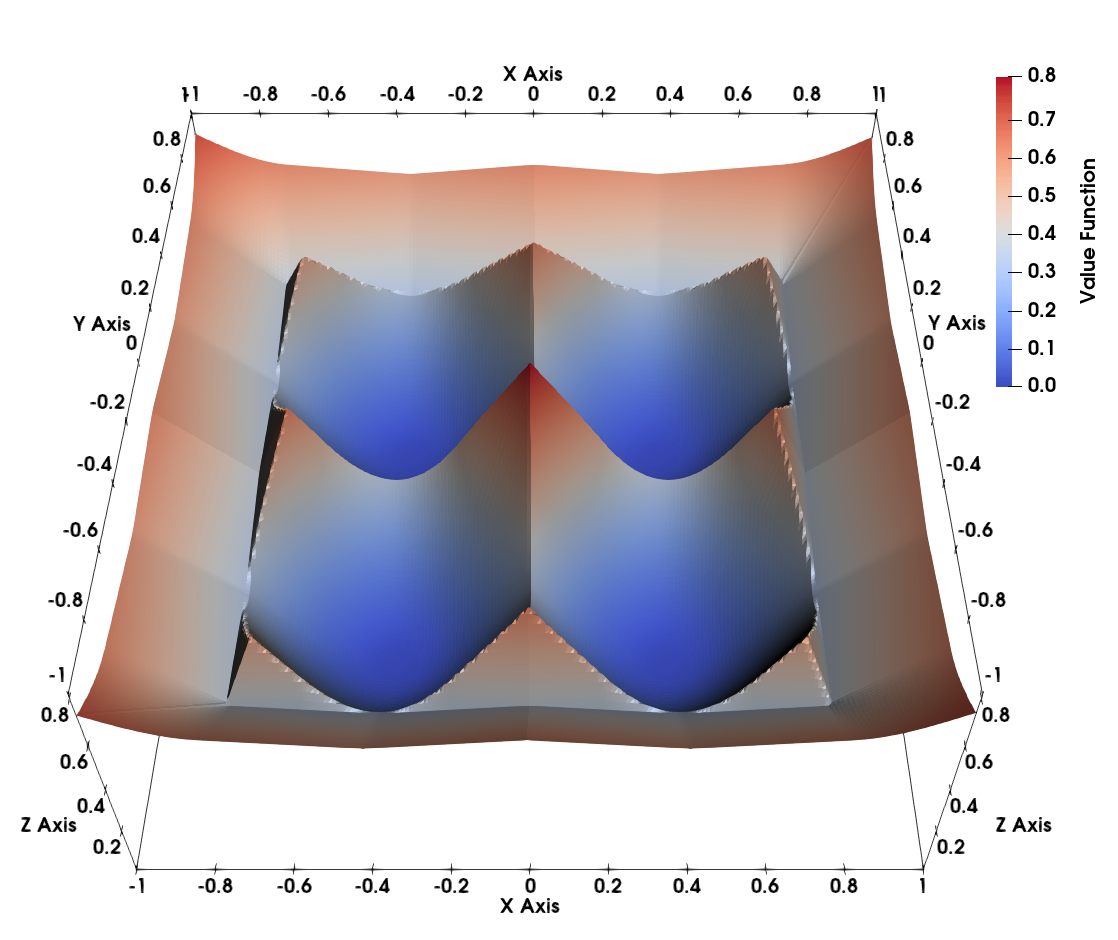} \\
(c) & (d)
\end{tabular}
\caption{Stratification for Test 3 (a), optimal dynamics (b), solution values and optimal trajectories (c), solution surface (d).}\label{Test3}}
\end{figure}\\

\noindent{\bf Test 4.} This last experiment is similar to Test 2, but in three dimensions. We consider a stratification of $\R^3$ obtained combining a horizontal square (including its sides and corners) with a vertical segment (including its end-points). 

\noindent More precisely, we set
 $$\mathbf{M}^0=\{P_{0,0}\}\cup\{ P_{0,1}\}\cup\{ P_{1,0}\}\cup \{P_{1,1}\}\cup\{ P_{1,2}\}\cup\{ P_{1,3}\}$$
with
$$P_{0,0}=\left(0,0,\frac12\right),\quad P_{0,1}=\left(0,0,0\right),$$
\medskip
$$
P_{1,0}=\left(-\frac12,-\frac12,0\right),\quad P_{1,1}=\left(\frac12,-\frac12,0\right),\quad P_{1,2}=\left(\frac12,\frac12,0\right),\quad P_{1,3}=\left(-\frac12,\frac12,0\right),$$
while
$$\mathbf{M}^1=L_0\cup L_{1,0}\cup L_{1,1}\cup L_{1,2}\cup L_{1,3}$$
with
$$L_0=\left\{0\right\}\times\left\{0\right\}\times \left(0,\frac12\right),$$
\medskip
$$L_{1,0}=\left\{-\frac12\right\}\times\left(-\frac12,\frac12\right)\times\left\{0\right\},\quad L_{1,1}=\left(-\frac12,\frac12\right)\times \left\{-\frac12\right\}\times\left\{0\right\},$$
\medskip
$$L_{1,2}=\left\{\frac12\right\}\times\left(-\frac12,\frac12\right)\times\left\{0\right\},\quad L_{1,3}=\left(-\frac12,\frac12\right)\times \left\{\frac12\right\}\times\left\{0\right\},$$
and
$$\mathbf{M}^2=S:=\left\{\|(x,y,0)\|_\infty<\frac12\right\}\times\left\{0\right\},\quad\mathbf{M}^3=V:=R^3\setminus(\overline{\mathbf{M}^0\cup\mathbf{M}^1\cup\mathbf{M}^2}).$$
On each connected component of these submanifolds, we define the dynamics, the running cost and the discount factor according to the following table:
\begin{center}
\begin{tabular}{|c|c|c|c|}
\hline
     Connected component & Dynamics & Running Cost & Discount factor  \\
     \hline
\rule{0pt}{2.5ex} $P_{0,0}$ & - & $\ell^{P_{0,0}}=0$ & $c^{P_{0,0}}=1$\\
     \hline
\rule{0pt}{2.5ex} $P_{0,1}$ & - & $\ell^{P_{0,1}}=1$ & $c^{P_0}=10^{-4}$\\
 \hline
\rule{0pt}{2.5ex} $P_{1,i}$\quad $i=0,1,2,3$ & - & $\ell^{P_{1,i}}=1$ & $c^{P_{1,i}}=10^{-4}$\\
      \hline
\rule{0pt}{2.5ex} $L_0$ & $b^{L_0}(x,y,z)\equiv 5$ & $\ell^{L_0}(x,y,z)\equiv 1$ & $c^{L_0}=10^{-4}$\\
 \hline
 \rule{0pt}{2.5ex} $L_{1,i}\quad i=0,1,2,3$ & $b^{L_{1,i}}(x,y,z)\equiv 5$ & $\ell^{L_{1,i}}(x,y,z)\equiv 1$ & $c^{L_{1,i}}=10^{-4}$\\
 \hline
 \rule{0pt}{2.5ex} $S$ & $b^{S}(x,y,z)\equiv 5$ & $\ell^{S}(x,y,z)\equiv 1$ & $c^{S}=10^{-4}$\\
 \hline
\rule{0pt}{2.5ex}  $V$ & $b^{V}(x,y,z)\equiv 1$ & $\ell^{V}(x,y,z)\equiv 1$ & $c^{V}=10^{-4}$\\
 \hline
\end{tabular}
\end{center}
This stratification is reported in Figure \ref{Test4}-a, where the color-map represents the values of the speed functions on the different submanifolds.\\
According to the choice of the discount factors, only $P_{0,0}$ acts as a target point, but on  $\mathbf{M}^1\cup\mathbf{M}^2$ we have a speed for the dynamics higher than the one on $V$. As in Test 2, we expect that the optimal trajectories starting from points far enough from $P_{0,0}$ will be attracted by these submanifolds, increasing their travelled distances before approaching the target.\\
The numerical results are reported in Figure \ref{Test4}-b (some iso-surface of the value function and some level-sets on a couple of two dimensional slices) and in Figure \ref{Test4}-c (optimal trajectories).\\
Looking at the level-sets on the horizontal slice, we observe that the solution behaves like the distance function from the boundary of $S$. On the other hand, entering $S$, the solution has circular level sets, behaving like the distance function from the point $P_{0,1}$. In particular, it is interesting to note that, once a trajectory starting from $V$ approaches $S$, it never leaves $S$, but proceeds along a straight line towards $P_{0,1}$, then it goes up along the segment $L_0$ towards the target.\\
A closer look at the iso-surfaces of the solution also reveals the shock surface resulting from the competition between $S$ and $L_0$.\\
Finally, we remark that, as in the previous tests, the contribution to the optimal solution of the four segments $L_{1,i}$ for $i=0,\dots,3$, of the four points $P_{1,i}$ for $i=0,\dots,3$, and of the end-point $P_{0,1}$ of $L_0$, is again irrelevant.   
\begin{figure}[!h]
\centering{
\begin{tabular}{c}
\begin{tabular}{cc}
	\includegraphics[width=0.49\textwidth]{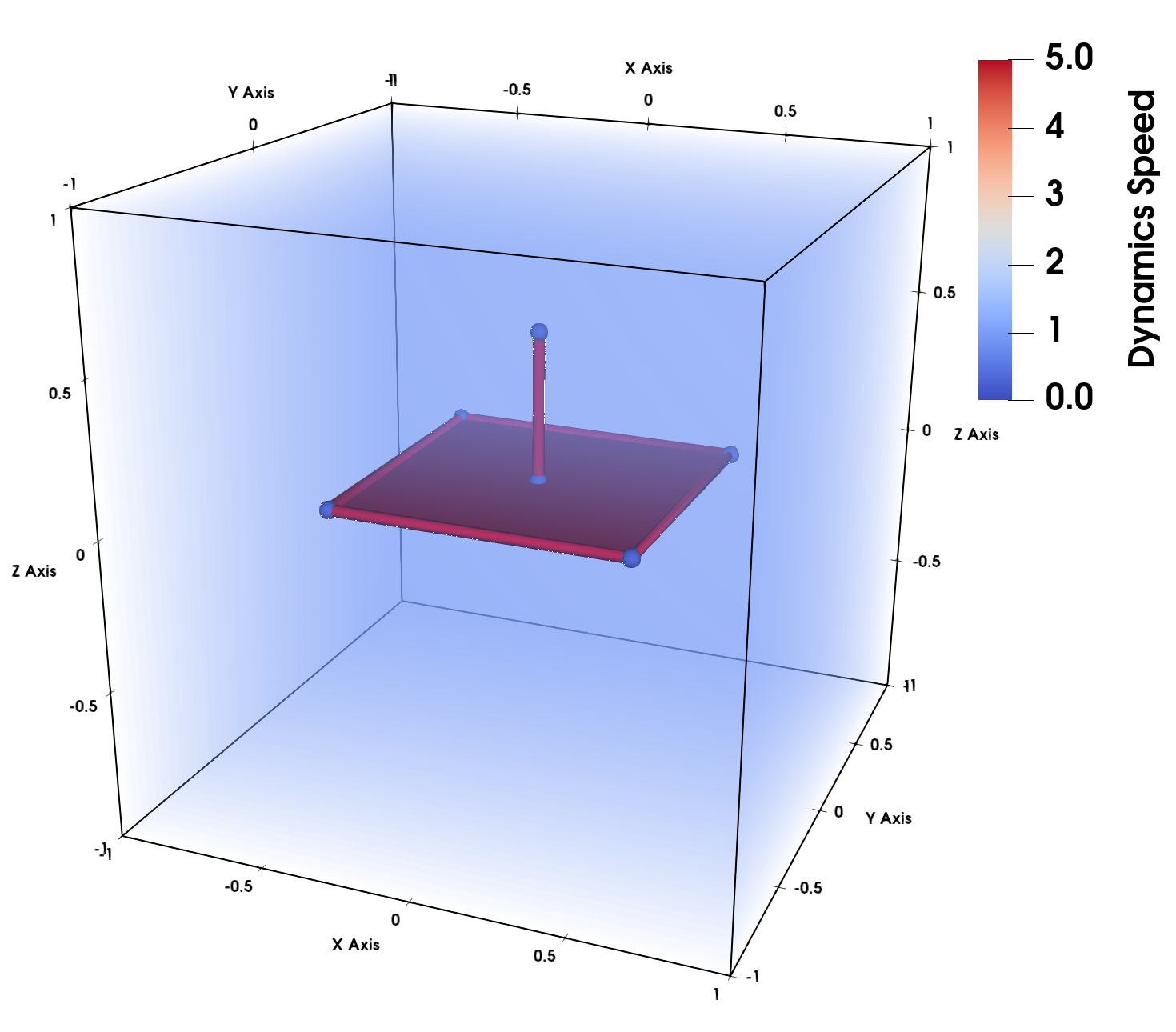} 
	&
	\includegraphics[width=0.49\textwidth]{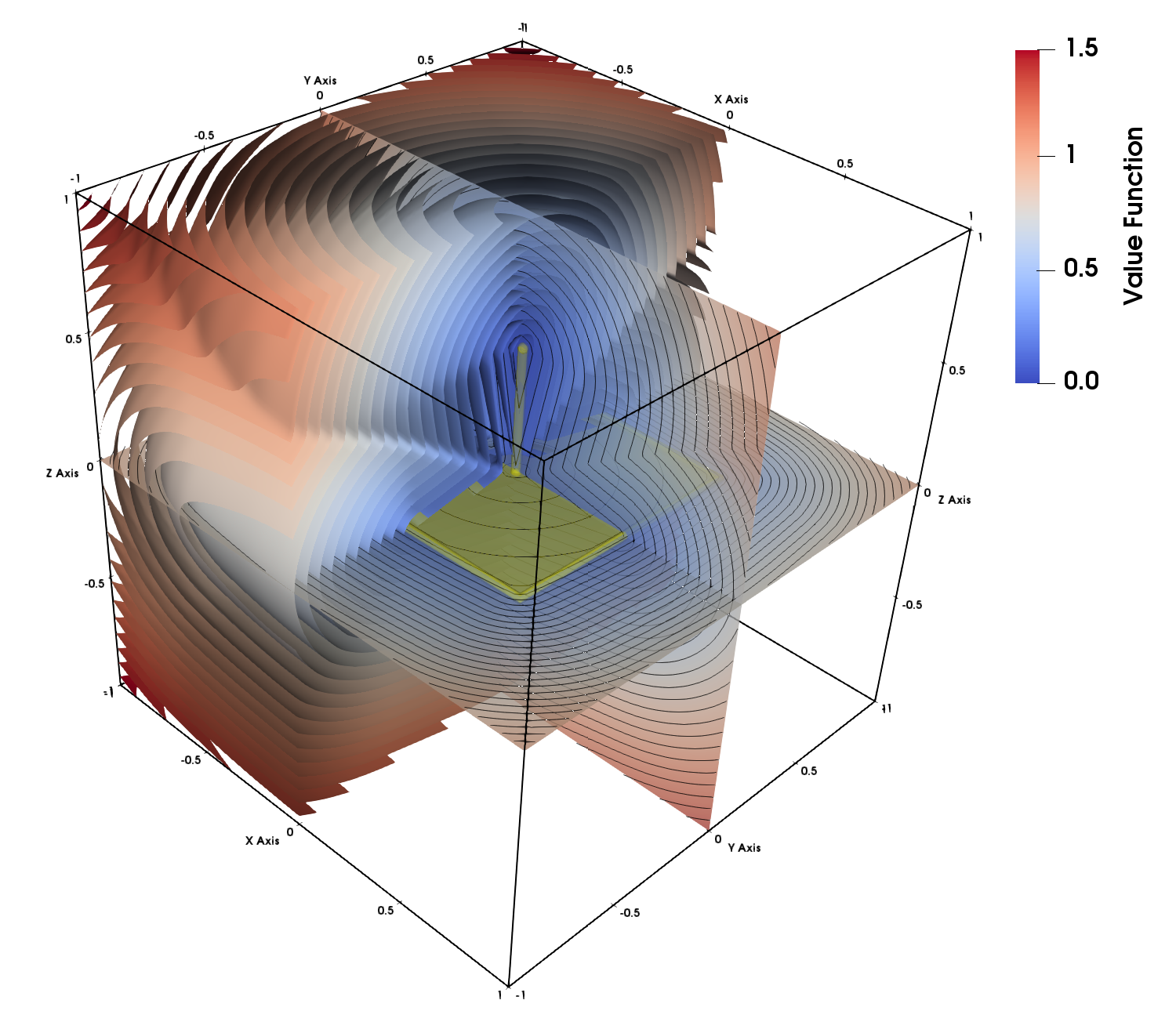} \\
	(a) & (b)
\end{tabular}
\\
	\includegraphics[width=0.65\textwidth]{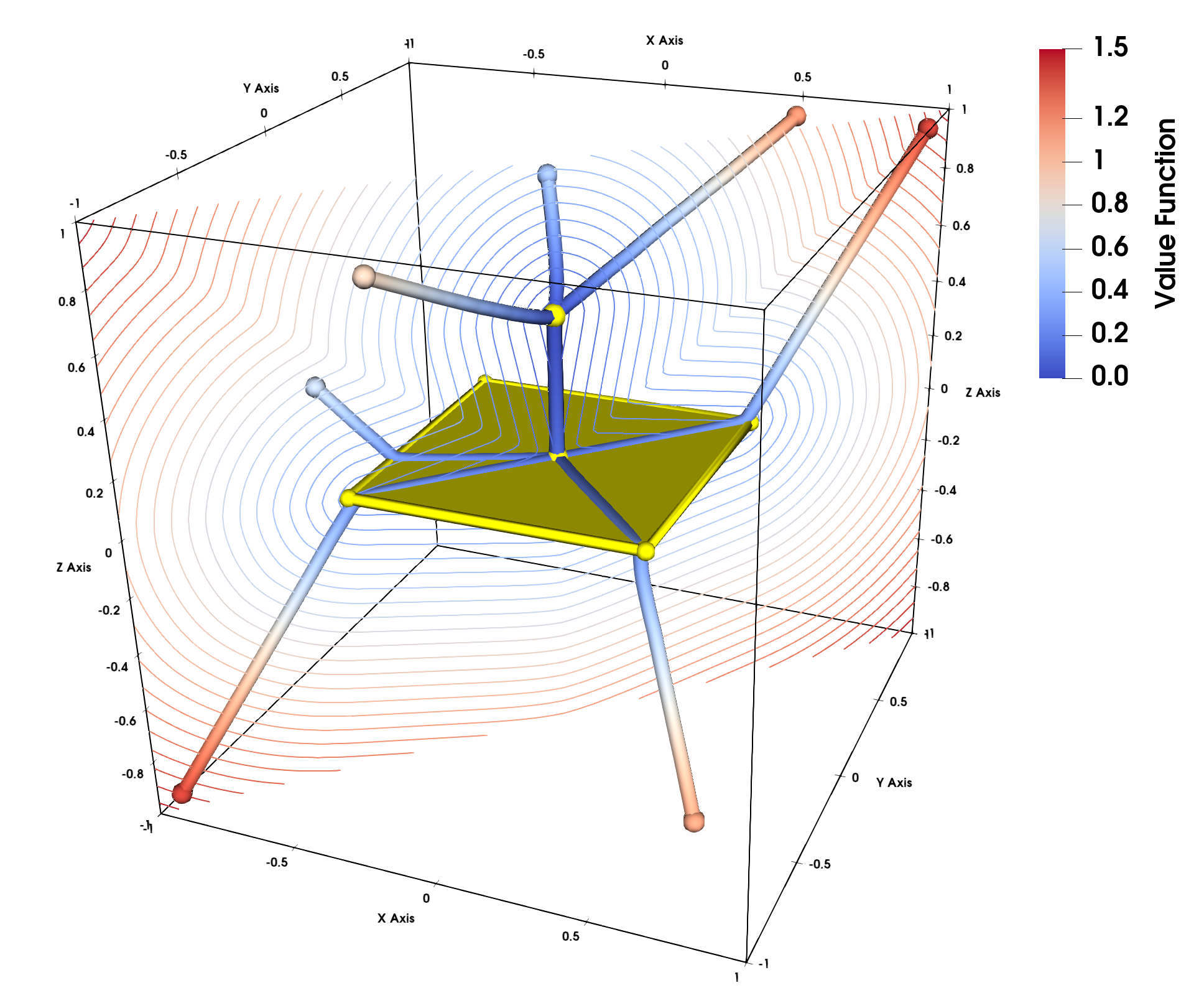} 
 \\
(c)
\end{tabular}
\caption{Stratification for Test 4 (a), solution surfaces and values (b), optimal trajectories (c).}\label{Test4}}
\end{figure}

\paragraph*{Acknowledgments.} 
The authors would like to thank Guy Barles and Emmanuel Chasseigne for providing them with an updated version of the book \cite{BaCh_book}.
\bibliographystyle{plain}

\end{document}